\setlist[enumerate]{font=\normalfont}
\newcommand{\N}{\mathbb{N}}
\newcommand{\Z}{\mathbb{Z}}
\newcommand{\Q}{\mathbb{Q}}
\newcommand{\K}{\mathbb{K}}
\newcommand{\T}{\mathbb{T}} %% 1-torus
\newcommand{\I}{\mathcal{I}}
\newcommand{\G}{\mathcal{G}} %% Groupoid
\newcommand{\D}{\mathcal{D}} %% Diagonal
\newcommand{\OO}{\mathcal{O}} %% C* algebra
\newcommand{\X}{\mathsf{X}} %% Shift space
\newcommand{\LL}{\mathsf{L}} %% Language
\newcommand{\0}{\mathtt{0}}
\newcommand{\1}{\mathtt{1}}
\DeclareMathOperator{\id}{id}
\DeclareMathOperator{\orb}{orb}
\let\tilde\widetilde
\let\leq\leqslant{}     %% Uligheder
\let\geq\geqslant{}
\let\subset\subseteq{}      %% Inklusioner
{}
\newcommand{\widesim}[2][1.5]{\mathrel{\overset{#2}{\scalebox{#1}[1]{$\sim$}}}}
\newtheorem{lemma}{Lemma}%[section] %% Sidste '[section]' gør at sætninger husker section
\newtheorem{corollary}[lemma]{Corollary}
\newtheorem{theorem}[lemma]{Theorem}
\newtheorem{proposition}[lemma]{Proposition}
\theoremstyle{definition}
\newtheorem{definition}[lemma]{Definition}
\newtheorem{example}[lemma]{Example}
\newtheorem{construction}[lemma]{Construction}
\newtheorem{remark}[lemma]{Remark}
\numberwithin{equation}{section}
\numberwithin{lemma}{section}
\renewcommand{\tocsection}[3]{%                                             % For sections
  \indentlabel{\@ifnotempty{#2}{\ignorespaces#1 #2.\quad}}#3\dotfill%
}
\renewcommand{\tocsubsection}[3]{%                                         
  \indentlabel{\@ifnotempty{#2}{\ignorespaces#1 #2.\quad}}#3\dotfill%
}
\let\origsection\section
\renewcommand\section{\@ifstar{\starsection}{\nostarsection}}
\newcommand\sectionspace{\vspace{0.5ex}}
\newcommand\nostarsection[1]{\sectionspace\origsection{#1}\sectionspace}
\newcommand\starsection[1]{\sectionspace\origsection*{#1}\sectionspace}
\g@addto@macro\bfseries{\boldmath}\makeatother
\subjclass[2020]{37A55 (primary), 37B10 (secondary).}
\def\subsection{\@startsection{subsection}{2}{\z@}{-3.25ex plus -1ex
  minus -.2ex}{1.5ex plus .2ex}{\normalsize\bf}}
\begin{document}

%******************************************************************
%   Abstract 
%******************************************************************
%% Abstract: 
\begin{abstract}
  This paper investigates the structure of $C^*$-algebras built from one-sided Sturmian subshifts.
  They are parametrised by irrationals in the unit interval and built from a local homeomorphism associated to the subshift.
  We provide an explicit construction and description of this local homeomorphism. 
  The $C^*$-algebras are $^*$-isomorphic exactly when the systems are conjugate,
  and they are Morita equivalent exactly when the defining irrationals are equivalent (this happens precisely when the systems are flow equivalent). 
  Using only elementary dynamical tools, we compute the dynamic asymptotic dimension of the (groupoid of the) local homeomorphism to be one,
  and by a result of Guentner, Willett, and Yu, it follows that the nuclear dimension of the $C^*$-algebras is one.
\end{abstract}

%******************************************************************
%   Title and formalia 
%******************************************************************

\title{Sturmian subshifts and their $C^*$-algebras}

\author[K.A.~Brix]{Kevin Aguyar Brix}
\address{School of Mathematics and Statistics, University of Glasgow, University Place, G12 8QQ, United Kingdom}
\email{kabrix.math@fastmail.com}
\keywords{Sturmian subshifts, $C^*$-algebras, nuclear dimension}
\dedicatory{Para Clelia, en admiraci\'on y respeto. San Genaro siempre estuvo cerca.}

\maketitle

% TOC
\setcounter{tocdepth}{1}
%\tableofcontents

%******************************************************************
%   Main body       
%******************************************************************

\section{Introduction}

Sturmian subshifts were introduced by Morse and Hedlund~\cite{Morse-Hedlund} as the first class of symbolic dynamical systems to be systematically studied.
They are canonical examples of Cantor minimal systems~\cite{GPS1995, Skau2000}, they have vanishing topological entropy, 
and they occur naturally as symbolic representations of the irrational rotation on the circle~\cite{Fogg,Berstel-Seebold,LM}.
They also appear as Denjoy homeomorphisms on the circle restricted to their unique minimal invariant Cantor sets, see~\cite{Putnam-Schmidt-Skau} for an illuminating account.
Symbolic dynamics befriended $C^*$-algebra theory with the introduction of Cuntz--Krieger algebras from (irreducible) shifts of finite type~\cite{CK80} in 1980,
and this provided the first large class of simple and purely infinite $C^*$-algebras.
Matsumoto and Carlsen are among the first to associate $C^*$-algebras to general subshifts in the same spirit, see e.g.~\cite{Mat2002,Carlsen2008,Carlsen-Matsumoto2004}. 
However, much of Matsumoto's work is concerned with systems satisfying a certain Condition (I) which is not enjoyed by Sturmian subshifts.

Nuclear dimension for $C^*$-algebras was introduced by Winter and Zacharias~\cite{Winter-Zacharias} as a noncommutative analogue of topological covering dimension,
and it has become a salient concept in the classification programme for nuclear $C^*$-algebras:
simple, separable, unital $C^*$-algebras satisfying the UCT are classified by $K$-theory and traces
exactly when they have \emph{finite} nuclear dimension~\cite{Kirchberg, Phillips, Gong-Lin-Niu, Elliott-Gong-Lin-Niu, TWW}.
The work going into determining the precise values culminated in the discovery that zero and one are the only possible finite values~\cite{Matui-Sato, Sato-White-Winter, BBSTWW, CETWW, Castillejos-Evington}
(see also the expository~\cite{WinterICM}),
coupled with the fact that only AF-algebras have nuclear dimension zero.
These remarkable results naturally lead the attention to the settting of nonsimple $C^*$-algebras with some recent advances~\cite{Brake-Winter, Easo, Evington, Bosa-Gabe-Sims-White, Ruiz-Sims-Tomforde}.
For a commutative $C^*$-algebra, the nuclear dimension coincides with the covering dimension of its spectrum
but in general not much is known about the precise value for nonsimple $C^*$-algebras.
The relationship between finite nuclear dimension and classification of nonsimple $C^*$-algebras remains an open problem.

This article adresses the construction and structure of $C^*$-algebras associated to one-sided Sturmian subshifts.
They are not crossed products.
Instead they are built from the topological groupoid of a certain local homeomorphism, a dynamical system abstractly associated to any one-sided subshift as in~\cite{Brix-Carlsen}.
This dynamical system, called a cover, is in general difficult to describe concretely,
though for sofic systems it coincides with the left Krieger graph (a shift of finite type).
Here we provide the first concrete description beyond the sofic case:
for one-sided Sturmian subshifts the cover is a union of the \emph{two-sided} Sturmian subshift together with a countable discrete orbit. 

The $C^*$-algebras we consider are parametrised over irrationals in the unit interval, 
and they are almost never $^*$-isomorphic (not even Morita equivalent), they have a unique ideal ($^*$-isomorphic to the compact operators on separable Hilbert space),
and the concrete characterisation of the cover allows us to recover part of an interesting but unpublished result of Carlsen~\cite{Carlsen2006} (using different methods):
the $C^*$-algebra is an extension of a simple crossed product by the compact operators.
Moreover, we deduce several structural results. 
The $C^*$-algebras are all infinite (though not properly infinite) and have real rank zero and stable rank two. 
Finally, using only elementary dynamical tools we compute the dynamic asymptotic dimension of the associated groupoid;
using a result of Guentner, Willett, and Yu~\cite{Guentner-Willett-Yu} we then infer that the nuclear dimension of the $C^*$-algebras is one.
The precise dimension values do not seem to be derivable from any of the general theorems available in the literature at the moment, so the method of proof is interesting in itself.
It utilises the particular structure of the Sturmian subshifts, and it would be interesting to see if a similar strategy can be employed for larger classes of dynamical systems.

The article is structured as follows:
after introducing the Sturmian subshifts in~\cref{sec:prelim},
we characterise when these systems are conjugate and flow equivalent in~\cref{sec:dynamics}.
In~\cref{sec:cover} we carefully construct and describe the cover associated to the one-sided Sturmian subshift.
This allows us to build the groupoid and $C^*$-algebra in~\cref{sec:algebras}
and here we also derive many structural results including the nuclear dimension.

\section*{Acknowledgements}
I would like to thank S\o ren Eilers for directing me towards the Sturmian subshifts and for providing me with initial references,
and I thank Jamie Gabe for many helpful discussions.
I gratefully acknowledge the support of the Carlsberg Foundation through an internationalisation fellowship.

\section{Preliminaries} \label{sec:prelim}

Let $\Z$, and $\N$, and $\N_+$ denote the set of integers, the set of nonnegative and positive integers, respectively.
We identify the circle $\T$ with $\mathbb{R}/ \Z$.

\subsection{Sturmian shift spaces}
We start by defining and providing a brief overview of Sturmian subshifts.
For a thorough treatment the reader is refered to, e.g.~\cite[Chapter 6]{Fogg} or~\cite[Chapter 13.7]{LM}.

Let $\alpha\in (0,1)\setminus \Q$ and consider the rigid rotation $R_\alpha\colon \T \to \T$ given by $R_\alpha(t) = t + \alpha~(\textrm{mod}~1)$, for $t\in \T$.
Consider the partition $\{[0,1 - \alpha), [1 - \alpha, 1) \}$ of the unit interval and define a coding map $I_\alpha\colon \T \to \{\0,\1\}$ by
\begin{equation}
  I_\alpha(t) = 
  \begin{cases}
    \0 & \textrm{if } t\in [0, 1 - \alpha), \\
    \1 & \textrm{if } t\in [1 - \alpha, 1),
  \end{cases}
\end{equation}
for $t\in \T$.
The \emph{one-sided Sturmian subshift} with parameter $\alpha$ is the space 
\begin{equation}
  \X_\alpha \coloneqq \overline{ \big\{ \big\{ I_\alpha(R_\alpha^i(t))\big\}_{i\in \N} : t\in \T \big\} } \subset \{\0,\1\}^\N,
\end{equation}
with the shift operation $\sigma_\alpha\colon \X_\alpha \to \X_\alpha$ given by $\sigma_\alpha(x)_i = x_{i + 1}$, for $x = (x_i)_{i\in \N}\in \X_\alpha$ and for $i\in \N$. 
In particular, if $x = \big\{ I_\alpha(R_\alpha^i(t))\big\}_i$ for some $t\in \T$, then 
\begin{equation}
  \sigma_\alpha(x) = \big\{ I_\alpha(R_\alpha^{i + 1}(t))\big\}_{i\in \N} = \big\{ I_\alpha(R_\alpha^i(t + \alpha))\big\}_{i\in \N},
\end{equation}
where, as before, $t + \alpha$ is to be understood modulo $1$.
The shift operation $\sigma_\alpha$ is an expansive and locally injective surjection.
The \emph{orbit} of $x\in \X_\alpha$ is 
\begin{equation} 
  \orb_\alpha(x) \coloneqq \bigcup_{k,l\in \N} \sigma_\alpha^{-l}(\sigma_\alpha^k(x)),
\end{equation}
and $x'\in \orb_\alpha(x)$ exactly if there are $k,l\in \N$ such that $\sigma_\alpha^l(x') = \sigma_\alpha^k(x)$.

The \emph{two-sided Sturmian subshift} is given as the projective limit $\Lambda_\alpha = \varprojlim (\X_\alpha, \sigma_\alpha)$
or, equivalently (and more concretely), as
\begin{equation}
  \Lambda_\alpha \coloneqq \overline{ \big\{ \big\{ I_\alpha(R_\alpha^i(t)) \big\}_{i\in \Z} : t\in \T \big\} } \subset \{\0,\1\}^\Z,
\end{equation}
with a shift operation $\sigma_\alpha\colon \Lambda_\alpha \to \Lambda_\alpha$ given by $\sigma_\alpha(x)_i = x_{i + 1}$,
for $x = (x_i)_{i\in \Z}\in \Lambda_\alpha$ and $i\in \Z$.
This is a homeomorphism.
We shall use the same symbol $\sigma_\alpha$ for the shifts on the one-sided and the two-sided systems, 
and this should cause no confusion.
There is a surjective continuous map $\rho_\alpha\colon \Lambda_\alpha \to \X_\alpha$ given by $\rho(x) = x_{[0,\infty)}$ for $x\in \Lambda_\alpha$.
It intertwines the shift operations (it is a factor map), and we shall refer to it as the canonical truncation.

A finite sequence $\mu = \mu_0 \dots\mu_n$ of length $|\mu| = n+1$ is an admissible word of $\X_\alpha$ if it appears in some infinite sequence $x\in \X_\alpha$.
Let $\LL(\X_\alpha)$ be the collection of admissible words.
A basis for the topology on $\X_\alpha$ is then given by cylinder sets of the form
\begin{equation} 
  Z(\mu) \coloneqq \{ x\in \X_\alpha : x_{[0, |\mu|)} = \mu \},
\end{equation}
where $\mu\in \LL(\X_\alpha)$.
As a space $\X_\alpha$ is homeomorphic to the Cantor space (in particular, there are no isolated points),
and the system $(\X_\alpha, \sigma_\alpha)$ is minimal in the sense that every orbit is dense.
In fact, if $\mu$ is an admissible word, then there is number $\beta(\mu)\in \N$ such that if $x\in \X_\alpha$, then $\mu$ appears in $x_{[0, \beta(\mu))}$.

The one-sided shift map $\sigma_\alpha$ is one-to-one except at a single point $\omega_\alpha\in \X_\alpha$ corresponding to $t = \alpha$ above.
This is the unique element satisfying 
\begin{equation} 
  \sigma_\alpha^{-1}(\omega_\alpha) = \{ \0\omega_\alpha, \1\omega_\alpha\},
\end{equation}
and we refer to $\omega_\alpha$ as a \emph{branch point}.\footnote{In~\cite[Chapter 6]{Fogg},
this point is called \emph{(left) special} and denoted $\ell$.}
With our choice of coding map $I_\alpha$, the preimage $\0 \omega_\alpha$ corresponds to $t = 0$,
and $\1\omega_\alpha$ is added in the closure;
however, using an alternative coding map $I'_\alpha$ defined in terms of the partition $\{(0, 1 - \alpha], (1 - \alpha, 1] \}$
the preimage $\1\omega_\alpha$ would correspond to $t = 0$, and $\0 \omega_\alpha$ would be added in the closure.

The system $(\X_\alpha, \sigma_\alpha)$ is aperiodic in the sense that it contains no eventually periodic points,
but the branch point $\omega_\alpha$ is isolated in past equivalence so $\X_\alpha$ does not satisfy Matsumoto's condition (I), cf.~\cite[Proposition 2.10]{Brix-Carlsen}.

The \emph{ordered cohomology} of a Sturmian system with parameter $\alpha\in (0,1)\setminus \Q$ is the ordered group $\Z + \alpha \Z$
(the ordering is inherited from the real line) with a distinguished order unit $1\in \Z + \alpha \Z$.
This is a simple dimension group with no infinitesimals, cf.~\cite[Section 1]{GPS1995}.

\subsection{Continued fractions}
The group $\textrm{GL}(2,\Z)$ ($2\times 2$ matrices with integer entries whose determinant is $\pm 1$) acts on irrationals by linear fractional transformations, i.e.
\begin{equation}
  \begin{pmatrix}
    a & b \\
    c & d
  \end{pmatrix} \alpha
  = \frac{a \alpha + b}{c \alpha + d},
\end{equation}
where $\alpha$ is irrational, and $a,b,c,d\in \Z$ satisfy $ad - cb = \pm 1$.
Two irrationals $\alpha$ and $\beta$ are \emph{equivalent} if they are in the same orbit under this action;
we write $\alpha \sim \beta$.

Every irrational number $\alpha$ has a unique continued fraction representation $[a_0; a_1, a_2,\ldots]$, with $\alpha_0\in \Z$ and $a_i\in \N_+$ for all $i\in \N_+$,
and two irrationals are equivalent if and only if their continued fraction representations $[a_0; a_1, a_2, \ldots]$ and $[b_0; b_1, b_2, \ldots]$ are tail equivalent, 
cf.~\cite[Section 10.11]{Hardy-Wright}.

\begin{example}
  The Fibonacci substitution 
  \begin{equation} 
    \tau\colon 
    \begin{cases}
      \texttt{0} \mapsto 01 \\
      \texttt{1} \mapsto 0
    \end{cases}
  \end{equation}
  with fixed point  
  \begin{equation} 
    x = \texttt{0 1 0 0 1 0 1 0 0 1} \dots
  \end{equation}
  is called the Fibonacci sequence.
  It is a Sturmian sequence with parameter $\alpha = (3 - \sqrt{5})/2$,
  and the continued fraction expansion is $\alpha = [0; 2, 1,1,\dots]$.
  In particular, it is ultimately periodic.
  See e.g. the very careful exposition in~\cite[Example 2.1.1]{Berstel-Seebold}.
\end{example}

%********************************************************************

\section{Dynamical relations of Sturmian systems} \label{sec:dynamics}
In this section, we discuss the dynamical relations between Sturmian systems of different parameters.
Most of the results are well known to experts and the section serves to highlight complete invariants of conjugacy and flow equivalence. 
The final result on rigidity of continuous orbit equivalence is new.

A pair of two-sided Sturmian systems $(\Lambda_\alpha, \sigma_\alpha)$ and $(\Lambda_\beta, \sigma_\beta)$ are \emph{conjugate} 
if there is a homeomorphism $h\colon \Lambda_\alpha \to \Lambda_\beta$ such that $h\circ \sigma_\alpha = \sigma_\beta \circ h$.
Since the shift is a homeomorphism, the reversed action $(\Lambda_\alpha, \sigma_\alpha^{-1})$ defines a new dynamical systems,
and we say that $(\Lambda_\alpha, \sigma_\alpha)$ and $(\Lambda_\beta, \sigma_\beta)$ are \emph{flip conjugate}
if either $(\Lambda_\alpha, \sigma_\alpha)$ or $(\Lambda_\alpha, \sigma_\alpha^{-1})$ is conjugate to $(\Lambda_\beta, \sigma_\beta)$.
Note that $\sigma_\alpha^{-1}$ corresponds to reversing the rotation of the circle, so it is conjugate to $\sigma_{(1-\alpha)}$.
Moreover, the map $\T \to \T$ sending $t$ to $1 - t$ (mod $1$) actually induces a conjugacy between $(\X_\alpha, \sigma_\alpha)$ and $(\X_{(1-\alpha)}, \sigma_{(1-\alpha)})$.
This map just implements an interchanging of the two symbols.
In particular, flip conjugacy implies conjugacy.
The systems are \emph{orbit equivalent} if there is a homeomorphism $h\colon \Lambda_\alpha \to \Lambda_\beta$ 
which maps the orbit of $x\in \Lambda_\alpha$ onto the orbit of $h(x)\in \Lambda_\beta$.

\begin{proposition}\label{thm:two-sided-conjugacy} 
  Let $(\Lambda_\alpha, \sigma_\alpha)$ and $(\Lambda_\beta, \sigma_\beta)$ be two-sided Sturmian subshifts.
  The following are equivalent:
  \begin{enumerate}[label=(\arabic*)]
    \item \label{i:alpha-beta} $\alpha = \beta$ or $\alpha = 1 - \beta$;
    \item \label{i:conjugate} $(\Lambda_\alpha, \sigma_\alpha)$ and $(\Lambda_\beta, \sigma_\beta)$ are two-sided conjugate;
    \item \label{i:OE} $(\Lambda_\alpha, \sigma_\alpha)$ and $(\Lambda_\beta, \sigma_\beta)$ are orbit equivalent; and
    \item \label{i:dimension-groups-unit} the dimension groups $(\Z + \alpha \Z, 1) \cong (\Z + \beta \Z, 1)$ are isomorphic as ordered groups with distinguished order units.
  \end{enumerate}
\end{proposition}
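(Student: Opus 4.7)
The plan is to prove the cyclic chain $\ref{i:alpha-beta} \Rightarrow \ref{i:conjugate} \Rightarrow \ref{i:OE} \Rightarrow \ref{i:dimension-groups-unit} \Rightarrow \ref{i:alpha-beta}$. The first two implications come essentially for free from the discussion preceding the proposition: the case $\alpha = \beta$ is trivial, while $\alpha = 1 - \beta$ is handled by the symbol swap $\0 \leftrightarrow \1$ arising from the circle flip $t \mapsto 1 - t \pmod{1}$, which intertwines $R_\alpha$ and $R_\beta$ and descends to a bi-infinite symbol swap conjugating $\Lambda_\alpha$ with $\Lambda_\beta$; alternatively, one builds the one-sided conjugacy first and lifts it to the two-sided systems via the projective limit $\Lambda_\alpha = \varprojlim(\X_\alpha, \sigma_\alpha)$. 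The step $\ref{i:conjugate} \Rightarrow \ref{i:OE}$ is immediate since any conjugacy carries orbits to orbits.

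For $\ref{i:OE} \Rightarrow \ref{i:dimension-groups-unit}$ I will invoke the Giordano--Putnam--Skau classification of Cantor minimal systems~\cite{GPS1995}: orbit equivalence is characterised by isomorphism of the dimension groups modulo infinitesimals, respecting the distinguished order unit. Since the Sturmian dimension group $(\Z + \alpha\Z, 1)$ has trivial infinitesimal subgroup, as recorded at the end of~\cref{sec:prelim}, orbit equivalence lifts directly to an isomorphism $(\Z + \alpha\Z, 1) \cong (\Z + \beta\Z, 1)$ as ordered groups with order unit.

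The actual content lies in the final implication $\ref{i:dimension-groups-unit} \Rightarrow \ref{i:alpha-beta}$. Suppose $\phi\colon \Z + \alpha\Z \to \Z + \beta\Z$ is an ordered group isomorphism with $\phi(1) = 1$. Since $\Z + \alpha\Z$ is dense in $\R$ and $\phi$ is order-preserving, $\phi$ extends uniquely by monotonicity to an order-preserving additive endomorphism of $\R$, hence by continuity to multiplication by some $\lambda > 0$; the normalisation $\phi(1) = 1$ forces $\lambda = 1$. Consequently $\Z + \alpha\Z = \Z + \beta\Z$ as subgroups of $\R$. Writing $\beta = a + b\alpha$ and $\alpha = c + d\beta$ with $a,b,c,d \in \Z$ and substituting, the irrationality of $\alpha$ forces $bd = 1$, so $b = d = \pm 1$. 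The case $b = 1$ combined with $\alpha, \beta \in (0,1)$ yields $\alpha = \beta$, and $b = -1$ yields $\alpha + \beta = 1$.

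I do not expect a serious obstacle: the GPS theorem handles the analytically heaviest step, and the arithmetic at the end is short. The most delicate point is cosmetic --- making sure that the notion of orbit equivalence defined in the statement (a homeomorphism carrying orbits to orbits) coincides with the one used in~\cite{GPS1995}, but both formulations agree for Cantor minimal systems. A minor extra care will be needed in verifying that the extension of $\phi$ to $\R$ is indeed well defined and linear; this is standard but worth spelling out explicitly so that the conclusion $\Z + \alpha\Z = \Z + \beta\Z$ is unambiguous.
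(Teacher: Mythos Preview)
Your proposal is correct and follows essentially the same cycle $\ref{i:alpha-beta}\Rightarrow\ref{i:conjugate}\Rightarrow\ref{i:OE}\Rightarrow\ref{i:dimension-groups-unit}\Rightarrow\ref{i:alpha-beta}$ as the paper, invoking GPS for the orbit-equivalence step and doing the same arithmetic with $\alpha=n+m\beta$, $\beta=p+q\alpha$ at the end. The only difference is that you spell out why a unit-preserving order isomorphism of dense subgroups of $\mathbb{R}$ must be the identity (hence $\Z+\alpha\Z=\Z+\beta\Z$), whereas the paper jumps straight to the relations; this is a clarification rather than a different argument.
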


\begin{proof}
  \labelcref{i:alpha-beta} $\implies$ \labelcref{i:conjugate}:
  The map $\T \to \T$ that sends $t$ to $1 - t$ induces a conjugacy between $(\Lambda_\alpha, \sigma_\alpha)$ and $(\Lambda_{(1 - \alpha)}, \sigma_{(1-\alpha)})$
  which simply interchanges the two symbols.

  \labelcref{i:conjugate} $\implies$ \labelcref{i:OE}: 
  This is clear.

  \labelcref{i:OE} $\iff$ \labelcref{i:dimension-groups-unit}:
  The dimension group of a Sturmian subshift contains no infinitesimals, so it follows from ~\cite[Theorems 2.1 and 2.2]{GPS1995} that two systems are orbit equivalent
  exactly when their dimension groups are order isomorphic in a way which respects the distinguished order units.
 
  \labelcref{i:dimension-groups-unit} $\implies$ \labelcref{i:alpha-beta}:
  If there is an order isomorphism $\Z +\alpha \Z \to \Z + \beta \Z$ which maps $1$ to $1$, then there are integers $n,m,p,q\in \Z$ such that $\alpha = n + \beta m$ and $\beta = p + \alpha q$.
  This means that $\alpha = (n + pm) + \alpha qm$, so $qm = 1$.
  Since $\alpha\in (0,1)$, we see that if $m = 1$, then $\alpha = n + \beta = \beta$, and if $m = -1$, then $\alpha = (n + pm) - \beta = 1 - \beta$.
\end{proof}

\begin{remark}
  This characterisation of conjugacy is well known, see e.g.~\cite[p. 26]{Dartnell-Durand-Maass}.
\end{remark}

The \emph{suspension} (or mapping torus) of $(\Lambda_\alpha, \sigma_\alpha)$ is the compact Hausdorff space $\Lambda_\alpha \times \mathbb{R} / \sim$
where $(x,s + 1) \sim (\sigma_\alpha(x), s)$. 
Let $[x,s]$ denote the equivalence class of $(x,s)$.
The suspension carries a flow (an action of $\mathbb{R}$) given by $\phi_{s'}[x,s] = [x, s' + s]$, for $s'\in \mathbb{R}$,
and two systems are \emph{flow equivalent} if there is a homeomorphism between their suspensions which respects the flow orbits in an orientation-preserving way, cf.~\cite[Section 13.6]{LM}.

\begin{proposition} 
  Let $(\Lambda_\alpha, \sigma_\alpha)$ and $(\Lambda_\beta, \sigma_\beta)$ be two-sided Sturmian subshifts.
  The following are equivalent:
  \begin{enumerate}[label=(\arabic*)]
    \item \label{i:equivalent-irrationals} $\alpha \sim \beta$ (equivalence of irrationals);
    \item \label{i:flow} $\Lambda_\alpha$ and $\Lambda_\beta$ are flow equivalent;
    \item \label{i:suspension-homeomorphism} the suspension spaces of $\Lambda_\alpha$ and $\Lambda_\beta$ are homeomorphic; and
    \item \label{i:dimension-groups} the dimension groups $\Z + \alpha \Z \cong \Z + \beta \Z$ are isomorphic as ordered groups (not necessarily preserving the distinguished order unit).
  \end{enumerate}
\end{proposition}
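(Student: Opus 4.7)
My plan is to split the proof into three separate equivalences: the algebraic $\labelcref{i:equivalent-irrationals}\Leftrightarrow\labelcref{i:dimension-groups}$, the topological $\labelcref{i:flow}\Leftrightarrow\labelcref{i:suspension-homeomorphism}$, and the main step $\labelcref{i:flow}\Leftrightarrow\labelcref{i:dimension-groups}$. For the algebraic equivalence, an order isomorphism $\phi\colon\Z+\alpha\Z\to\Z+\beta\Z$ of dense ordered subgroups of $\mathbb{R}$ must be the restriction of multiplication by a unique positive real scalar $\lambda$. Writing $\phi(1)=d+c\beta$ and $\phi(\alpha)=b+a\beta$ with integers $a,b,c,d$ yields $\alpha=(a\beta+b)/(c\beta+d)$; applying the same reasoning to $\phi^{-1}$ (multiplication by $\lambda^{-1}$) forces $ad-bc=\pm 1$, so $\alpha\sim\beta$. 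Conversely, given $\alpha=(a\beta+b)/(c\beta+d)$ with $ad-bc=\pm 1$, the positive scalar $\lambda=\pm(c\beta+d)$ furnishes the order isomorphism explicitly.

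For $\labelcref{i:flow}\Leftrightarrow\labelcref{i:suspension-homeomorphism}$, the forward direction is by definition. For the converse, I would observe that since $\Lambda_\alpha$ is a Cantor set, any continuous path in the suspension is locally confined to a single flow line, so orbits and path components coincide; thus any homeomorphism of suspensions is automatically orbit-preserving. To handle orientation, I would combine the symbol-flip conjugacy $(\Lambda_\alpha,\sigma_\alpha)\cong(\Lambda_{1-\alpha},\sigma_{1-\alpha})$ with the rotation-reversal conjugacy $(\Lambda_\alpha,\sigma_\alpha^{-1})\cong(\Lambda_{1-\alpha},\sigma_{1-\alpha})$ (both recorded in the paragraph preceding \cref{thm:two-sided-conjugacy}) to obtain a flip self-conjugacy $(\Lambda_\alpha,\sigma_\alpha)\cong(\Lambda_\alpha,\sigma_\alpha^{-1})$. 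Its suspension is an orientation-reversing, orbit-preserving self-homeomorphism; precomposing with this if necessary upgrades any orbit-preserving homeomorphism to an orientation-preserving flow equivalence.

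The main obstacle is $\labelcref{i:flow}\Leftrightarrow\labelcref{i:dimension-groups}$. I would appeal to the known characterization of flow equivalence for Cantor minimal systems: two such systems are flow equivalent exactly when their dimension groups modulo infinitesimals are isomorphic as ordered groups, with no requirement on the distinguished order unit. This is the flow-equivalence analogue of the Giordano--Putnam--Skau theorem used in \cref{thm:two-sided-conjugacy}, due to Boyle--Handelman (with later refinements by Matui). Since Sturmian dimension groups carry no nonzero infinitesimals (as noted in \cref{sec:prelim}), this is precisely condition \labelcref{i:dimension-groups}. A self-contained alternative uses the S-adic representation of Sturmian subshifts: each elementary continued-fraction step realises either a conjugacy (symbol interchange) or a Sturmian substitution, and substitutions on minimal Cantor subshifts preserve the flow equivalence class.
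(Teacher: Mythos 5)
Your treatments of (1)$\Leftrightarrow$(4) and (2)$\Leftrightarrow$(3) are correct and genuinely different from the paper's: the paper cites Effros--Shen for (1)$\Leftrightarrow$(4) and Fokkink's theorem for (1)$\Leftrightarrow$(3), whereas your scaling argument for order isomorphisms of dense subgroups of $\mathbb{R}$ is in effect a self-contained proof of the Effros--Shen result, and your path-component argument together with the orientation-reversing self-equivalence built from the flip conjugacy $(\Lambda_\alpha,\sigma_\alpha)\cong(\Lambda_\alpha,\sigma_\alpha^{-1})$ correctly upgrades a homeomorphism of suspensions to a flow equivalence (using that the suspension of a minimal Cantor system is connected, so the orientation behaviour is globally constant).

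The gap is in the implication (4)$\Rightarrow$(2). The general theorem you invoke --- that Cantor minimal systems are flow equivalent exactly when their dimension groups modulo infinitesimals are isomorphic as unit-free ordered groups --- is false. What Giordano--Putnam--Skau actually prove (their Theorem 2.6, which is what the paper uses) is that the unit-free ordered dimension group classifies \emph{Kakutani strong orbit equivalence}, and strong orbit equivalence is strictly weaker than flow equivalence: by Sugisaki's realization results a Cantor minimal system of entropy zero can be strong orbit equivalent to one of positive entropy, yet vanishing of topological entropy is a flow-equivalence invariant (flow equivalence of Cantor minimal systems amounts to conjugacy of induced systems on clopen sets, and inducing cannot create positive entropy). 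So for general Cantor minimal systems, isomorphism of unit-free dimension groups does not imply flow equivalence; the Boyle--Handelman statement you have in mind concerns irreducible shifts of finite type, not the minimal setting. The paper closes exactly this gap with a Sturmian-specific rigidity input: by \cref{thm:two-sided-conjugacy}, strong orbit equivalent \emph{Sturmian} systems are already conjugate, which upgrades ``Kakutani strong orbit equivalent'' to ``Kakutani equivalent'', hence flow equivalent. Your one-sentence S-adic alternative would, if fleshed out (tail-equivalent continued fractions, each Gauss-map step realized by passing to an induced system on a clopen set, hence a flow equivalence), repair (1)$\Rightarrow$(2) without appealing to GPS at all; but as written the key claim that each substitution step preserves the flow equivalence class is asserted rather than proved. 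The converse direction (2)$\Rightarrow$(4) is unproblematic, since inducing on a clopen set preserves the ordered group and only moves the order unit.
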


\begin{proof}
  \labelcref{i:equivalent-irrationals} $\iff$ \labelcref{i:dimension-groups}:
  The fact that dimension groups of the form $\Z + \alpha \Z$ and $\Z + \beta\Z$ for irrationals $\alpha$ and $\beta$ are isomorphic as ordered groups 
  exactly when $\alpha$ and $\beta$ are equivalent irrationals is stated in~\cite[Theorem 3.2]{Effros-Shen} (see also~\cite[p. 148]{Douglas}).

  \labelcref{i:flow} $\iff$ \labelcref{i:dimension-groups}:
  By~\cite[Theorem 2.6]{GPS1995}, the systems $(\Lambda_\alpha, \sigma_\alpha)$ and $(\Lambda_\beta, \sigma_\beta)$ are Kakutani strong orbit equivalent 
  (in the sense that $(\Lambda_\alpha, \sigma_\alpha)$ and $(\Lambda_\beta, \sigma_\beta)$ are strong orbit equivalent to systems which are Kakutani equivalent)
  if and only if their dimension groups are isomorphic as ordered groups.
  Strong orbit equivalent Sturmian systems are conjugate (cf.~\cref{thm:two-sided-conjugacy}), and Kakutani equivalence is the same as flow equivalence
  (see e.g.~\cite[Remark 2.4]{Kosek-Ormes-Rudolph} and~\cite{Parry-Sullivan})
  so the equivalence follows.

  \labelcref{i:equivalent-irrationals} $\iff$ \labelcref{i:suspension-homeomorphism}:
  This is known as Fokkink's theorem~\cite[Theorem 4.6]{Barge-Williams}.
  Although it is not mentioned by Barge and Williams in their paper, the Denjoy continuum of a rigid rotation may be identified with the suspension space of a Sturmian subshift, 
  see e.g.~\cite[Section 3.6]{Fuhrmann-Groger-Jager}.
\end{proof}

Let us now consider dynamical relations between one-sided Sturmian systems.
A homeomorphism $h\colon \X_\alpha \to \X_\beta$ is a \emph{continuous orbit equivalence} if there are continuous maps 
$k, l\colon \X_\alpha \to \N$ and $k',l'\colon \X_\beta \to \N$ such that
\begin{align} 
  \sigma_\beta^{l(x)}(h(x)) &= \sigma_\beta^{k(x)}(h(\sigma_\alpha(x))), \\
  \sigma_\alpha^{l'(y)}(h^{-1}(y)) &= \sigma_\alpha^{k'(y)}(h^{-1}(\sigma_\beta(y))),
\end{align}
for $x\in \X_\alpha$ and $y\in \X_\beta$,
in which case $\X_\alpha$ and $\X_\beta$ are continuously orbit equivalent.
We say $h$ is an \emph{eventual conjugacy} if we can choose $l = k + 1$ and $l' = k' + 1$
in which case $\X_\alpha$ and $\X_\beta$ are eventually conjugate.
Finally, $h$ is a conjugacy if $h\circ \sigma_\alpha = \sigma_\beta\circ h$ 
in which case we say that $\X_\alpha$ and $\X_\beta$ are one-sided conjugate.

\begin{proposition} 
  Let $(\X_\alpha, \sigma_\alpha)$ and $(\X_\beta, \sigma_\beta)$ be one-sided Sturmian subshifts.
  \begin{enumerate}[label=(\roman*)]
    \item \label{i:eventual-conjugacy} An eventual conjugacy $h\colon \X_\alpha \to \X_\beta$ is a conjugacy.
    \item \label{i:coe} If $(\X_\alpha, \sigma_\alpha)$ and $(\X_\beta, \sigma_\beta)$ are continuously orbit equivalent, then they are conjugate.
  \end{enumerate}
\end{proposition}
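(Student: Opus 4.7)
The plan is to reduce part (ii) to part (i) by showing that any continuous orbit equivalence between one-sided Sturmian subshifts is necessarily an eventual conjugacy, after which part (i) applies.

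For part (i), I would exploit the specific branch-point structure of Sturmian systems. Since $\sigma_\beta$ is one-to-one everywhere except at $\omega_\beta$, two distinct points $z, z' \in \X_\beta$ with a common image under $\sigma_\beta^n$ must have $\{\sigma_\beta^j(z), \sigma_\beta^j(z')\} = \{\0\omega_\beta, \1\omega_\beta\}$ at some $0 \leq j < n$; in particular one of $z, z'$ sits in the countable pre-orbit $\bigcup_{n \geq 0} \sigma_\beta^{-n}(\omega_\beta)$. Consequently the set of $x \in \X_\alpha$ for which neither $\sigma_\beta(h(x))$ nor $h(\sigma_\alpha(x))$ meets this pre-orbit is the complement of a countable set in a Cantor space, hence dense. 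On this dense set the eventual conjugacy relation $\sigma_\beta^{k(x)+1}(h(x)) = \sigma_\beta^{k(x)}(h(\sigma_\alpha(x)))$ collapses to $\sigma_\beta(h(x)) = h(\sigma_\alpha(x))$, and continuity of $h$, $\sigma_\alpha$, $\sigma_\beta$ pushes the identity to all of $\X_\alpha$.

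For part (ii), set $c := l - k \colon \X_\alpha \to \Z$; it is continuous and $\Z$-valued, hence locally constant. It suffices to prove $c \equiv 1$, since the analogous statement for $c' := l' - k'$ follows by symmetry and part (i) then concludes. By local constancy it is enough to pin down $c$ on a dense set. The same branch-point reasoning as in (i) yields, on a dense countable-complement set of $x$, the identity
\begin{equation*}
  \sigma_\beta^{c(x)}(h(x)) = h(\sigma_\alpha(x)) \text{ if } c(x) \geq 0, \qquad h(x) = \sigma_\beta^{-c(x)}(h(\sigma_\alpha(x))) \text{ if } c(x) \leq 0.
\end{equation*}
The value $c(x) = 0$ is ruled out immediately because it forces $h(x) = h(\sigma_\alpha(x))$, violating injectivity of $h$ in an aperiodic system. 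For $c(x) = m \geq 2$, applying $h^{-1}$ to $h(\sigma_\alpha(x)) = \sigma_\beta^m(h(x))$ and iterating the corresponding relation for $h^{-1}$ gives $\sigma_\alpha(x) = \sigma_\alpha^N(x)$ for a suitable integer $N$, and a careful count forces $N \neq 1$, contradicting aperiodicity. The case $c(x) \leq -1$ is handled by the symmetric argument.

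The main obstacle lies in the last step: I will need to track $c$ and $c'$ simultaneously, since the iteration producing $\sigma_\alpha(x) = \sigma_\alpha^N(x)$ depends on the signs of $c'$ along a finite piece of the forward orbit of $h(x)$, and a priori neither cocycle is known to be positive. I expect to handle this by working on the intersection of the complements of the $\omega_\alpha$- and $\omega_\beta$-pre-orbits under $h$ and $h^{-1}$ (still dense, being co-countable) and iterating the cocycle identities in both directions until a periodicity obstruction emerges; along the way this should also eliminate the pathological sign patterns for $c'$ and secure the claim $c \equiv 1$ cleanly.
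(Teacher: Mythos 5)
Your part (i) is correct and is essentially the paper's argument: off the countable orbit of the branch point every point has a unique past, so the eventual-conjugacy relation collapses to the conjugacy relation on a dense set, and continuity finishes. The paper phrases this via the unique-past property rather than your explicit branching analysis, but the content is identical.

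Part (ii) is where the proposal breaks down. You reduce (ii) to (i) by claiming that the cocycle $c=l-k$ of any continuous orbit equivalence is identically $1$, but your mechanism for excluding $c(x)=m\geq 2$ does not close. Carrying out the iteration you describe on the co-countable good set, the relation $h(\sigma_\alpha(x))=\sigma_\beta^{m}(h(x))$ combined with the cocycle identity for $h^{-1}$ yields
\begin{equation*}
  \sigma_\alpha(x)=h^{-1}\big(\sigma_\beta^{m}(h(x))\big)=\sigma_\alpha^{\,c'(y)+c'(\sigma_\beta y)+\cdots+c'(\sigma_\beta^{m-1}y)}(x),\qquad y=h(x),
\end{equation*}
so aperiodicity gives only $\sum_{j=0}^{m-1}c'(\sigma_\beta^{j}y)=1$. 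This is no contradiction for $m\geq 2$ unless you already know $c'\geq 1$ everywhere on the good set; but proving $c'\geq 1$ is exactly the symmetric instance of the claim $c\geq 1$ you are trying to establish, so the argument is circular. Ruling out negative values of the cocycles is a genuinely global matter: excluding $c(x)=0$ is local (injectivity plus aperiodicity, as you say), but excluding sign changes is the content of the Boyle--Tomiyama/Giordano--Putnam--Skau rigidity theorem for continuous orbit equivalence of Cantor minimal systems, whose conclusion is moreover only that the cocycle is \emph{cohomologous} to $\pm 1$ (after which one builds a \emph{new} map that is a flip conjugacy), not that it equals $1$ pointwise. So the statement you are trying to prove may well be stronger than what is true, and in any case your ``careful count'' and ``periodicity obstruction'' are standing in for the hardest step rather than supplying it. The paper avoids all of this by a different route: it lifts $h$ to a continuous orbit equivalence of the covers via \cite[Lemma 6.3]{Brix-Carlsen}, observes that the lift preserves the isolated points and hence restricts to the closed invariant copy of the two-sided system $\Lambda_\alpha\cong\tilde{\Lambda}_\alpha$, and then invokes \cite[Theorem 2.4]{GPS1995} to get flip conjugacy of the two-sided systems, whence $\alpha=\beta$ or $\alpha=1-\beta$ and the one-sided systems are conjugate (by some conjugacy, not necessarily $h$). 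If you want a self-contained proof of (ii), you would need to either import that two-sided rigidity theorem as the paper does, or reprove the positivity of the cocycle, which is a substantial argument you have not supplied.
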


\begin{proof}
  \labelcref{i:eventual-conjugacy}:
  Let $h\colon \X_\alpha \to \X_\beta$ be an eventual conjugacy with continuous cocycles $k\colon \X_\alpha \to \N$ and $k'\colon \X_\beta\to \N$.
  Take $x\in \X_\alpha$ such that $h(x) \notin \orb_\beta(\omega_\beta)$.
  Then $h(\sigma_\alpha(x)) \notin \orb(\omega_\beta)$ and the relation
  \begin{equation}
    \sigma_\beta^{k(x) + 1}(h(x)) = \sigma_\beta^{k(x)}(h(\sigma_\alpha(x)))
  \end{equation}
  implies that $k|_{\orb_\alpha(x)} = 0$ (since $h(x)$ and $h(\sigma_\alpha(x))$ have unique pasts, cf.~\cref{def:unique-past}).
  Since $\orb_\alpha(x)$ is dense in $\X_\alpha$ and $k$ is continuous we have $k = 0$, so $h$ is a conjugacy.

  \labelcref{i:coe}
  This proof uses results and terminology from the next section.
  Suppose $h\colon \X_\alpha \to \X_\beta$ be a continuous orbit equivalence.
  By a lifting result~\cite[Lemma 6.3]{Brix-Carlsen}, there is an induced continuous orbit equivalence $\tilde{h}\colon \tilde{\X}_\alpha \to \tilde{\X}_\beta$ between the covers
  satifying $\tilde{h}\circ \pi_\alpha = \pi_\alpha\circ h$.
  Since $\tilde{h}$ maps isolated points to isolated point, the map $\tilde{h}$ restricts to a continuous orbit equivalence $\tilde{h}\colon \tilde{\Lambda}_\alpha \to \tilde{\Lambda}_\beta$.
  Via the conjugacies $\phi_\alpha\colon \Lambda_\alpha \to \tilde{\Lambda}_\alpha$ and $\phi_\beta \colon \Lambda_\beta \to \tilde{\Lambda}_\beta$, 
  we therefore obtain an orbit equivalence $\tilde{h}\colon \Lambda_\alpha \to \Lambda_\beta$ with a continuous cocycle $c\colon \Lambda_\alpha\to \Z$ such that 
  \begin{equation}
    \sigma_\beta^{c(x)} \tilde{h}(x) = \tilde{h}(\sigma_\alpha(x)),
  \end{equation}
  for $x\in \Lambda_\alpha$.
  Therefore, the systems $(\Lambda_\alpha, \sigma_\alpha)$ and $(\Lambda_\beta, \sigma_\beta)$ are two-sided continuously orbit equivalent and hence flip conjugate, by~\cite[Theorem 2.4]{GPS1995}.
  This means that $\alpha = \beta$ or $\alpha = 1 - \beta$, so $(\X_\alpha, \sigma_\alpha)$ and $(\X_\beta, \sigma_\beta)$ are conjugate.
\end{proof}

\section{The cover of a Sturmian subshift}\label{sec:cover}
In this section, we first review the construction of the cover of a general subshift from~\cite[Section 2.1]{Brix-Carlsen}
and then analyse the cover associated to a Sturmian system in detail.

Fix $N\in \N_+$ and let $\sigma$ be the shift operation on $\{1,\dots,N\}^\N$, i.e. $\sigma(x)_i = x_{i+1}$ for any $x = (x_i)_{i\in \N} \in \{1,\dots,N\}^\N$.
To any closed subset $\X \subset \{1,\dots,N\}$ which is shift invariant (in the sense that $\sigma(\X) = \X$)
we can associate a totally disconnected compact Hausdorff space $\tilde{\X}$ with a local homeomorphism $\sigma_{\tilde{\X}}$ which we shall also refer to as a shift operation.
There is a surjective continuous map $\pi_\X\colon \tilde{\X} \to \X$ which intertwines the shift operations,
and the pair $(\tilde{\X}, \sigma_{\tilde{\X}})$ is called the \emph{cover} of the subshift $(\X, \sigma|_\X)$.
There is also a (not necessarily continuous) injective function $\iota_\X\colon \X\to \tilde{\X}$ such that $\pi_\X\circ \iota_\X = \id_\X$ and $\iota_\X(\X)$ is dense in $\tilde{\X}$.

\subsection{The cover}
Let us first construct the cover of a one-sided subshift $(\X, \sigma_\X)$ with $\sigma_\X(\X) = \X$.
For $l\in \N$, the $l$-\emph{past} of a point $x\in \X$ is the set 
\[
  P_l(x) \coloneqq \{\mu\in \LL(\X) : |\mu| = l,~\mu x\},
\]
and $x$ is \emph{isolated in past equivalence} if $P_l(x) = P_l(x')$ implies $x = x'$, for some $l\in \N$.
In this case, we say $x$ is isolated in $l$-past equivalence.
Note that if $x$ is isolated in $l$-past equivalence, then it is also isolated in $(l+1)$-past equivalence.

Let $\preceq$ be the partial order on the set $\I = \{ (k,l)\in \N\times \N : k\leq l\}$ given by
\begin{equation} 
  (k_1,l_1) \preceq (k_2,l_2) \iff k_1 \leq k_2~\textrm{and}~l_1-k_1 \leq l_2-k_2,
\end{equation}
for $(k_i,l_i)\in \I$, $i = 1,2$.
Each $(k,l)\in \I$ defines an equivalence relation $\widesim{k,l}$ on $\X$ given by
\begin{equation} 
  x \widesim{k,l} x' \iff x_{[0,k)} = x'_{[0,k)}~\textrm{and}~P_l(\sigma_\X^k(x)) = P_l(\sigma_\X^k(x')).
\end{equation}
Let ${}_k[x]_l$ denote the $\widesim{k,l}$-equivalence class of $x\in \X$, and let ${}_k\X_l = \X / \widesim{k,l}$ be the finite set of $\widesim{k,l}$-equivalence classes.
If $(k_1,l_1) \preceq (k_2, l_2)$, then 
\begin{equation} 
  x \widesim{k_2,l_2} x' \implies x \widesim{k_1,l_1} x',
\end{equation}
so there is a surjection ${}_{(k_1,l_1)}Q_{(k_2,l_2)}\colon {}_{k_2}\X_{l_2} \to {}_{k_1}\X_{l_1}$ given by ${}_{(k_1,l_1)}Q_{(k_2,l_2)}({}_{k_2}[x]_{l_2}) = {}_{k_1}[x]_{l_2}$, for $x\in \X$.
The sets ${}_k\X_l$ together with the maps $Q$ constitute a projective system and we define $\tilde{\X}$ to be the projective limit.

\begin{definition}
  Let $(\X, \sigma_\X)$ be a one-sided subshift with $\sigma_\X(\X) = \X$.
  The cover $\tilde{\X}$ is the projective limit $\varprojlim_{(k,l)\in \I} ({}_k\X_l, Q)$.
\end{definition}
The cover is a second-countable and totally disconnected compact Hausdorff space.
An element of $\tilde{\X}$ is of the form $\tilde{x} \in \prod_{(k,l)\in \I} {}_k \X_l$ with ${}_k \tilde{x}_l = {}_k [x]_l$
for some $x\in \X$ (which we refer to as a \emph{$(k,l)$-representative} of $\tilde{x}$),
and such that if $x$ is a $(k_2,l_2)$-representative of $\tilde{x}$ and $(k_1,l_1)\preceq (k_2,l_2)$, then $x$ is also a $(k_1,l_1)$-representative.

A basis for the topology of $\tilde{\X}$ is given by compact open sets of the form
\begin{equation} 
  U(z, k,l) \coloneqq \{ \tilde{x}\in \tilde{\X} : z \widesim{k,l} {}_k\tilde{x}_l \}
\end{equation}
for $z\in \Z$ and $(k,l)\in \I$.

For each $(k,l)\in \I$ with $k\geq 1$, we observe that
\begin{equation} 
  x \widesim{k,l} x' \implies \sigma_\X(x) \widesim{k-1,l} \sigma_\X(x'),
\end{equation}
so there is a surjective map ${}_k\sigma_l\colon {}_k\X_l \to {}_{k-1}\X_l$ given by ${}_k\sigma_l({}_k[x]_l) = {}_{k-1}[\sigma_\X(x)]_l$ for $x\in \X$.
This defines a \emph{shift operation} $\sigma_{\tilde{\X}}$ on $\tilde{\X}$ given by
\begin{equation} 
  {}_k\sigma_{\tilde{\X}}(\tilde{x})_l = {}_{k+1}\sigma_l({}_{k+1}[{}_{k+1}\tilde{x}_l]_l) = {}_k[\sigma_\X({}_{k+1}\tilde{x}_l)]_l,
\end{equation}
for $\tilde{x}\in \tilde{\X}$, and $\sigma_{\tilde{\X}}$ is a surjective local homeomorphism.

There is a canonical map $\pi_\X \colon \tilde{\X} \to \X$ given as follows:
if $\tilde{x}\in \tilde{\X}$, then $x = \pi_\X(\tilde{x})$ is the unique point satisfying $x_{[0,k)} = ({}_k\tilde{x}_l)_{[0,k)}$, for every $(k,l)\in \I$.
This is a factor map in the sense that it is surjective and satisfies $\pi_\X \circ \sigma_\X = \sigma_{\tilde{\X}}\circ \pi_\X$.
It is injective (and hence a conjugacy) exactly if $(\X, \sigma_\X)$ is a shift of finite type.

Finally, there is an injective function $\iota_\X \colon \X \to \tilde{\X}$ defined as follows:
if $x\in \X$ then $\iota_\X(x)$ is the unique element whose $(k,l)$-representative is $x$ for all $(k,l)\in \I$.
This is a section in the sense that $\pi_\X\circ \iota_\X = \id_\X$ but it is not necessarily continuous.

Before specialising to Sturmian subshifts, we record two lemmas which are valid for general one-sided subshifts.

\begin{lemma}\label{lem:cover-isolated}
  Let $\X$ be a one-sided subshift.
  Any isolated point in the cover $\tilde{\X}$ is contained in the image of $\iota_\X\colon \X\to \tilde{\X}$,
  and each fibre $\pi_\X^{-1}(x)$ contains at most one isolated point.
\end{lemma}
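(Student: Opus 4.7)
My plan is to exploit the two stated properties of $\iota_\X$ that are available even without continuity: namely that $\iota_\X(\X)$ is dense in $\tilde{\X}$ and that $\pi_\X \circ \iota_\X = \id_\X$.

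For the first assertion, I would proceed as follows. Suppose $\tilde{x} \in \tilde{\X}$ is isolated, so that $\{\tilde{x}\}$ is open. The density of $\iota_\X(\X)$ forces $\iota_\X(\X) \cap \{\tilde{x}\} \neq \emptyset$, so there exists $x \in \X$ with $\iota_\X(x) = \tilde{x}$. That is exactly the claim. Concretely, one can also exhibit the open neighbourhood: because the sets $U(z,k,l)$ form a basis, there is $(k_0, l_0) \in \I$ and a word configuration $z$ such that $U(z,k_0,l_0) = \{\tilde{x}\}$, and density then guarantees some $\iota_\X(x)$ lies in this singleton. The proof therefore needs nothing beyond the basic facts recorded in the construction.

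For the fibre statement, I would suppose $\tilde{x}_1, \tilde{x}_2 \in \pi_\X^{-1}(x)$ are both isolated. By the first part, each equals $\iota_\X(x_i)$ for some $x_i \in \X$. Applying $\pi_\X$ and using $\pi_\X \circ \iota_\X = \id_\X$ yields $x = x_1 = x_2$, and hence $\tilde{x}_1 = \iota_\X(x) = \tilde{x}_2$.

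The argument is essentially immediate once one has the density of $\iota_\X(\X)$ together with the section identity, so there is no real obstacle; the only substantive input is the basic topological fact that a dense subset must meet every nonempty open set, applied to the singleton formed by an isolated point. If anything, the only thing worth being careful about is that isolation in $\tilde{\X}$ is genuinely detected by the basis $\{U(z,k,l)\}$, but this is built into the projective-limit topology.
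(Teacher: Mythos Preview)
Your proof is correct and essentially parallels the paper's: both parts rest on the section identity $\pi_\X\circ\iota_\X=\id_\X$ together with the fact that an isolated point must lie in $\iota_\X(\X)$. The only stylistic difference is that for the first assertion you invoke density of $\iota_\X(\X)$ abstractly, whereas the paper unpacks the basic open set $\{\tilde{x}\}=U(z,k,l)$ and checks directly that $z$ is a $(k',l')$-representative of $\tilde{x}$ for all $(k',l')\succeq(k,l)$, thereby identifying $\tilde{x}=\iota_\X(z)$ explicitly; your density argument is a clean shortcut that avoids this computation.
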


\begin{proof}
  Assume $\tilde{x}\in \tilde{\X}$ is an isolated point.
  This means that
  \begin{equation}
    \{\tilde{x}\} = U(z, k, l),
  \end{equation}
  for some $z\in \X$ and $(k,l)\in \I$, so that ${}_k \tilde{x}_l \widesim{k,l} z$.
  Note that if $(k,l) \preceq (k',l')$, then $U(z, k', l') \subset U(z, k, l)$.
  In particular, ${}_k \tilde{x}_l \widesim{k', l'} z$, for all $(k,l) \preceq (k', l')$.
  So $\tilde{x} = \iota_\X(z)$.

  For each $x\in \X$, we have $\pi_\X^{-1}(x)\cap \iota_\X(\X) = \{\iota_\X(x)\}$, so each fibre contains at most one isolated point.
\end{proof}

Let us say that $\sigma_\X^k$ is \emph{injective at $x\in \X$} if $\sigma_\X^{-k}(x) \coloneqq (\sigma_\X^k)^{-1}(x)$ is a singleton.
The next definition is particularly relevant to Sturmian systems.

\begin{definition} \label{def:unique-past}
  A point $x\in \X$ has a \emph{unique past} if $\sigma_\X^k$ is injective at $x$ for all $k\in \N$.
\end{definition}

A similar definition applies to elements in the cover.

\begin{lemma}\label{lem:unique-past}
  Let $\X$ be a one-sided subshift with $\sigma_\X(\X) = \X$.
  If $\sigma_\X^k$ is injective at $x\in \X$, then $\sigma_{\tilde{\X}}^k$ is injective at any $\tilde{x}\in \pi_\X^{-1}(x)$.
  In particular, if $x\in \X$ has a unique past, then any $\tilde{x}\in \pi_\X^{-1}(x)$ has a unique past.
\end{lemma}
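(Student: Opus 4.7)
The plan is to transfer the injectivity of $\sigma_\X^k$ at $x$ across the factor map $\pi_\X$ into the cover, using the intertwining relation $\pi_\X\circ\sigma_{\tilde\X}=\sigma_\X\circ\pi_\X$ together with the shift formula ${}_n(\sigma_{\tilde\X}^k(\tilde y))_l={}_n[\sigma_\X^k(w)]_l$, valid for any $(n+k,l)$-representative $w$ of $\tilde y$. The ``in particular'' clause follows immediately by applying the first part for every $k\in\N$.

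Given $\tilde y_1,\tilde y_2\in\sigma_{\tilde\X}^{-k}(\tilde x)$, the first step would be to push down via $\pi_\X$: the intertwining yields $\sigma_\X^k(\pi_\X(\tilde y_i))=\pi_\X(\tilde x)=x$ for $i=1,2$, so the injectivity hypothesis at $x$ forces a common value $\pi_\X(\tilde y_1)=\pi_\X(\tilde y_2)=:y$. The task is then to promote this equality in $\X$ to equality of all coordinates ${}_m(\tilde y_1)_l={}_m(\tilde y_2)_l$ in the projective limit. At indices of the form $(n+k,l)$, I would pick $(n+k,l)$-representatives $w_i$ of $\tilde y_i$ and note that two constraints pin down their $\widesim{n+k,l}$-class independently of $i$: first, $\pi_\X(\tilde y_i)=y$ forces $(w_i)_{[0,n+k)}=y_{[0,n+k)}$; second, the shift formula combined with $\sigma_{\tilde\X}^k(\tilde y_i)=\tilde x$ gives $\sigma_\X^k(w_i)\widesim{n,l}v$ for any $(n,l)$-representative $v$ of $\tilde x$, hence $P_l(\sigma_\X^{n+k}(w_i))=P_l(\sigma_\X^n(v))$. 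Both constraints are $i$-independent, so $w_1\widesim{n+k,l}w_2$.

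For an arbitrary coordinate $(m,l)\in\I$ with $m<k$, I would observe that $(m,l)\preceq(k,l+k-m)\in\I$ and pull the equality established at the finer index back along the connecting map $Q$ of the projective system. The main obstacle is purely bookkeeping rather than conceptual: one must carefully distinguish ``representatives in $\X$'' from ``coordinates of the projective limit'', and verify that the two constraints above genuinely depend only on $y$ and on $\tilde x$, not on the auxiliary representatives $v$ or $w_i$. Once all coordinates agree, $\tilde y_1=\tilde y_2$, which gives the desired injectivity.
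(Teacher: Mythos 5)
Your proposal is correct, and its first step coincides exactly with the paper's: push $\tilde y_1,\tilde y_2\in\sigma_{\tilde\X}^{-k}(\tilde x)$ down via the intertwining relation and use injectivity of $\sigma_\X^k$ at $x$ to conclude $\pi_\X(\tilde y_1)=\pi_\X(\tilde y_2)$. Where you diverge is in how you finish: the paper at this point simply invokes \cite[Lemma 2.8(i)]{Brix-Carlsen} (the statement recorded as \cref{lem:BC-lemma}, that equal $\pi_\X$-images together with $\tilde\sigma^k(\tilde y_1)=\tilde\sigma^l(\tilde y_2)$ force $\tilde y_1=\tilde y_2$), whereas you re-derive the special case $k=l$ of that lemma by hand inside the projective limit: at each index $(n+k,l)$ the $\widesim{n+k,l}$-class of a representative is pinned down by the length-$(n+k)$ prefix (determined by $y$) and by $P_l(\sigma_\X^{n+k}(w_i))=P_l(\sigma_\X^n(v))$ (determined by $\tilde x$), and the remaining indices are recovered by cofinality of $\{(n+k,l)\}$ under $\preceq$. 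The bookkeeping is sound — including the observation that $(m,l)\preceq(k,l+k-m)$ handles the coordinates with $m<k$ — so your argument buys a self-contained proof that exposes the mechanism of the cited lemma, at the cost of length; the paper's version is shorter but opaque without consulting the reference.
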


\begin{proof}
  Suppose $\sigma_\X^k$ is injective at $x\in \X$ and let $\mu\in \LL_k(\X)$ be the unique prefix of $x$ with $|\mu| = k$.
  Fix $\tilde{x}\in \pi_\X^{-1}(x)$.
  Let $\tilde{y}, \tilde{z}\in \tilde{\sigma}_\X^{-k}(\tilde{x})$ and observe that
  \begin{equation}
    \pi_\X(\tilde{y}) = \mu x = \pi_\X(\tilde{z}).
  \end{equation}
  By~\cite[Lemma 2.8(i)]{Brix-Carlsen}, we have $\tilde{y} = \tilde{z}$.
  If $x$ has unique past, then this argument applies for all $k\in \N_+$, so we conclude that $\tilde{x}$ has a unique past.
\end{proof}

\subsection{Sturmian systems}
Let us now restrict attention to Sturmian systems.
When the parameter is $\alpha\in (0,1)\setminus \Q$, we denote the cover by $(\tilde{\X}_\alpha, \tilde{\sigma}_\alpha)$, 
the factor map by $\pi_\alpha$, and the injection by $\iota_\alpha$.
Recall that the shift operation $\sigma_\alpha$ is one-to-one everywhere except at the unique branch point $\omega_\alpha\in \X_\alpha$ at which $\sigma_\alpha$ is two-to-one,
cf.~\cite[Theorem 6.1.20]{Fogg}.
We first recall a lemma which will be useful throughout the section (see~\cite[Lemma 2.8]{Brix-Carlsen} for a more general statement and a proof).

\begin{lemma}\label{lem:BC-lemma} 
  Let $(\X_\alpha, \sigma_\alpha)$ be a Sturmian subshift and let $(\tilde{\X}_\alpha, \tilde{\sigma}_\alpha)$ be its cover.
  If $\tilde{x}, \tilde{z}\in \tilde{\X}$ satisfy $\pi_\alpha(\tilde{x}) = \pi_\alpha(\tilde{z})$ and $\tilde{\sigma}_\alpha^k(\tilde{x}) = \tilde{\sigma}_\alpha^l(\tilde{z})$ for some $k,l\in \N$,
  then $\tilde{x} = \tilde{z}$.
\end{lemma}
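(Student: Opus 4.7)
The plan is to first reduce to the case $k = l$ using the aperiodicity of Sturmian systems, and then use the projective limit description of the cover to propagate the hypothesis through every index pair $(p, q) \in \I$.

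For the reduction, I apply the factor map $\pi_\alpha$ to the identity $\tilde{\sigma}_\alpha^k(\tilde{x}) = \tilde{\sigma}_\alpha^l(\tilde{z})$. Since $\pi_\alpha$ intertwines the shifts and $\pi_\alpha(\tilde{x}) = \pi_\alpha(\tilde{z}) =: x$, this yields $\sigma_\alpha^k(x) = \sigma_\alpha^l(x)$. Sturmian subshifts contain no eventually periodic points, so $k = l$.

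For the main step, I fix $(p, q) \in \I$ with $p + k \leq q$ and pick $(p+k, q)$-representatives $y$ of $\tilde{x}$ and $y'$ of $\tilde{z}$. The definition of $\pi_\alpha$ forces $y_{[0, p+k)} = x_{[0, p+k)} = y'_{[0, p+k)}$. On the other hand, writing the equality at level $(p, q)$ coming from $\tilde{\sigma}_\alpha^k(\tilde{x}) = \tilde{\sigma}_\alpha^k(\tilde{z})$ gives ${}_p[\sigma_\alpha^k(y)]_q = {}_p[\sigma_\alpha^k(y')]_q$, which in particular yields $P_q(\sigma_\alpha^{p+k}(y)) = P_q(\sigma_\alpha^{p+k}(y'))$. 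Together these two facts mean precisely that $y \widesim{p+k, q} y'$, so ${}_{p+k}\tilde{x}_q = {}_{p+k}\tilde{z}_q$. Given any target level $(p', q') \in \I$, I choose $q$ large enough that $(p', q') \preceq (p' + k, q)$; the transition map $Q$ then propagates the equality downward to ${}_{p'}\tilde{x}_{q'} = {}_{p'}\tilde{z}_{q'}$. Since $(p', q')$ is arbitrary, $\tilde{x} = \tilde{z}$.

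I expect the main obstacle to be the bookkeeping between the partial order on $\I$ and the shift operation on the projective limit: one must verify that the shift unfolds correctly on representatives, and that one can always select a level $(p' + k, q)$ fine enough to dominate a prescribed target $(p', q')$. Once this calibration is in place, the verification is purely combinatorial and relies essentially only on aperiodicity, so the argument in fact generalises to any aperiodic one-sided subshift.
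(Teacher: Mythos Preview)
Your argument is correct. The reduction to $k=l$ via aperiodicity is sound, and the subsequent bookkeeping on the projective system is carried out properly: choosing a $(p+k,q)$-representative $y$ of $\tilde{x}$, the shift formula indeed makes $\sigma_\alpha^k(y)$ a $(p,q)$-representative of $\tilde{\sigma}_\alpha^k(\tilde{x})$, and combining the prefix information from $\pi_\alpha$ with the past information from the shift equality gives exactly the $\widesim{p+k,q}$-relation. The final cofinality step---picking $q\geq q'+k$ so that $(p',q')\preceq (p'+k,q)$---is also fine.

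For comparison: the paper does not prove this lemma at all; it simply quotes the special case of \cite[Lemma~2.8]{Brix-Carlsen} and refers the reader there for the argument. That reference establishes the statement for arbitrary one-sided subshifts, without assuming aperiodicity, so it cannot begin by reducing to $k=l$ as you do. Your route is therefore a genuine simplification available in the Sturmian (or more generally aperiodic) setting: the aperiodicity kills the possible mismatch between $k$ and $l$ up front, after which the verification is purely combinatorial on the projective limit. The trade-off is exactly what you note in your last paragraph---your proof extends to aperiodic subshifts but not beyond, whereas the cited lemma holds in full generality.
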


We first discuss isolated points.

\begin{lemma} \label{lem:Sturmian-cover-isolated}
  For $x\in \X_\alpha$, the fibre $\pi_\alpha^{-1}(x) \subset \tilde{\X}_\alpha$ contains an isolated point if and only if $x\in \orb_\alpha(\omega_\alpha)\subset \X_\alpha$.
\end{lemma}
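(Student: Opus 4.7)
The plan is to combine \cref{lem:cover-isolated}, which asserts that any isolated point in a fibre must be of the form $\iota_\alpha(x)$, with a direct analysis of past-equivalence classes. Concretely, $\iota_\alpha(x)$ is isolated in $\tilde{\X}_\alpha$ precisely when the $\widesim{k, l}$-class of $x$ in $\X_\alpha$ is $\{x\}$ for some $(k, l) \in \I$, since such a singleton class forces $U(x, k, l) = \{\iota_\alpha(x)\}$ by a coherence argument on the projective limit (any finer equivalence class is contained in a coarser one, so the coherent system is pinned down at every level). Thus the task reduces to determining when the $\widesim{k, l}$-class of $x$ collapses to $\{x\}$.

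For the backward direction, suppose $\sigma_\alpha^l(x) = \sigma_\alpha^k(\omega_\alpha)$. I would use that $\omega_\alpha$ is the unique point of $\X_\alpha$ admitting two length-one predecessors: a short induction shows $P_{k+1}(\sigma_\alpha^k(\omega_\alpha))$ consists of exactly $\0 (\omega_\alpha)_{[0,k)}$ and $\1 (\omega_\alpha)_{[0,k)}$, and if $P_{k+1}(y') = P_{k+1}(\sigma_\alpha^k(\omega_\alpha))$ then $(\omega_\alpha)_{[0,k)} y'$ must admit both $\0$ and $\1$ as length-one predecessors, forcing $(\omega_\alpha)_{[0,k)} y' = \omega_\alpha$ and hence $y' = \sigma_\alpha^k(\omega_\alpha)$. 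Applied to $y' = \sigma_\alpha^l(x')$, this shows that $x' \widesim{l, k+1} x$ implies $\sigma_\alpha^l(x') = \sigma_\alpha^l(x)$; combined with $x'_{[0,l)} = x_{[0,l)}$ this forces $x' = x$, so $U(x, l, k+1) = \{\iota_\alpha(x)\}$ and $\iota_\alpha(x)$ is isolated.

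For the forward direction I would argue the contrapositive: if $x \notin \orb_\alpha(\omega_\alpha)$, then no forward iterate of $x$ lies in $\orb_\alpha(\omega_\alpha)$ either (since the grand orbit is shift-invariant), so $\sigma_\alpha^k(x) \notin \{\sigma_\alpha^j(\omega_\alpha) : 0 \leq j \leq l - 1\}$ for every $(k, l)$, and consequently each $P_l(\sigma_\alpha^k(x))$ is a singleton $\{\mu_{k, l}\}$. Given any $(k, l) \in \I$, the finite set $\{\sigma_\alpha^j(\omega_\alpha) : 0 \leq j \leq l - 1\}$ misses $\sigma_\alpha^k(x)$, so I can pick $N > k$ large enough that every point in $\sigma_\alpha^k(Z(x_{[0, N)}))$ lies outside this bad set; since $\X_\alpha$ has no isolated points I then select $x' \neq x$ in $Z(x_{[0, N)})$. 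A direct check yields $|P_l(\sigma_\alpha^k(x'))| = 1$, while admissibility of $\mu_{k, l} \sigma_\alpha^k(x')$ is inherited from that of $\mu_{k, l} \sigma_\alpha^k(x)$ using the common prefix of length $N - k$ (any forbidden finite window straddling the concatenation would already be visible in $\mu_{k, l} \sigma_\alpha^k(x)$). Hence $x' \widesim{k, l} x$, and since $(k, l)$ was arbitrary, $\iota_\alpha(x)$ is not isolated.

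The technical heart of the argument is the forward direction: one must simultaneously choose $N$ to push $\sigma_\alpha^k(x')$ out of the finite bad set $\{\sigma_\alpha^j(\omega_\alpha) : 0 \leq j \leq l-1\}$ and to keep the common prefix long enough to transfer admissibility of the past word $\mu_{k, l}$ from $x$ to $x'$. Both conditions are routine given the tight branching structure of Sturmian systems (only $\omega_\alpha$ has two preimages), but combining them into a single choice of $N$ is where the real, if mild, work lies.
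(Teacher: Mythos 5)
Your reduction via \cref{lem:cover-isolated} to the question of when the $\widesim{k,l}$-class of $x$ is a singleton is sound, and your backward direction is essentially the paper's argument: $\sigma_\alpha^k(\omega_\alpha)$ is the unique point with $(k+1)$-past $\{\0(\omega_\alpha)_{[0,k)},\1(\omega_\alpha)_{[0,k)}\}$ because a point whose past contains both letters must be $\omega_\alpha$. One small repair is needed: your index pair $(l,k+1)$ need not lie in $\I$ (for $x=\mu\omega_\alpha$ with $|\mu|=l\geq 2$ one has $k=0$ and $l>k+1$); since isolation in $(k+1)$-past equivalence implies isolation in $m$-past equivalence for all $m\geq k+1$, replacing $(l,k+1)$ by $(l,\max(l,k+1))$ fixes this, and is exactly why the paper uses $U(\mu\omega_\alpha,k,k)$ on the backward orbit.

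The forward direction, however, contains a genuine gap at the step you yourself single out. You transfer admissibility of $\mu_{k,l}\,\sigma_\alpha^k(x')$ from that of $\mu_{k,l}\,\sigma_\alpha^k(x)$ on the grounds that ``any forbidden finite window straddling the concatenation would already be visible in $\mu_{k,l}\,\sigma_\alpha^k(x)$.'' That is the argument for shifts of finite type, and Sturmian subshifts are not of finite type (nor even sofic): a window that straddles the concatenation point and extends beyond position $N-k$ of $\sigma_\alpha^k(x')$ is \emph{not} a window of $\mu_{k,l}\,\sigma_\alpha^k(x)$, so its admissibility is not inherited, and in general a long common prefix does not determine the past (points of $Z((\omega_\alpha)_{[0,m)})$ have varying $1$-pasts for every $m$). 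The conclusion of your step does hold, but only because of a Sturmian-specific fact you never invoke: there is exactly one left special word of each length, namely $(\omega_\alpha)_{[0,m)}$, so the unique $l$-past of a point is forced letter by letter from a prefix unless one of the intermediate words is a prefix of $\omega_\alpha$ --- an obstruction which (one can check) your condition that $\sigma_\alpha^k(Z(x_{[0,N)}))$ avoids $\{\sigma_\alpha^j(\omega_\alpha):j<l\}$ happens to exclude. Without that analysis the step is unjustified. The paper avoids the issue entirely: using recurrence it finds $K$ with $\sigma_\alpha^K(x)$ beginning with $\mu x_{[0,k)}$, so that $z=\sigma_\alpha^{K+l-k}(x)$ satisfies $z\widesim{k,l}x$ with the required past witnessed by an actual occurrence of $\mu x_{[0,k)}$ inside $x$ itself, and no admissibility transfer is needed.
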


\begin{proof}
  Let $\omega = \omega_\alpha$ be the branch point.
  If $x = \sigma_\alpha^k(\omega)$ for some $k\in \N$, then $\sigma_\alpha^k(\omega)$ is the unique element satisfying
  \begin{equation}
    P_{k+1}(\sigma_\alpha^k(\omega)) = \{\0 \omega_{[0,k)}, \1\omega_{[0,k)}\},
  \end{equation}
  so $\sigma_\alpha^k(\omega)$ is isolated in $(k + 1)$-past equivalence.
  This implies that 
  \begin{equation} 
    \{ \iota_\alpha(x) \} = U(x, 0, k + 1),
  \end{equation}
  and $\iota_\alpha(x)\in \tilde{\X}_\alpha$ is an isolated point.

  Suppose instead $x\in \sigma_\alpha^{-k}(\omega)$ for some $k\in \N_+$ so that $x = \mu \omega$ with $|\mu| = k$.
  If $x \widesim{k,k} z$ for some $z\in \X_\alpha$, then $z\in Z_\alpha(\mu)$ and 
  \begin{equation} 
    P_k(\sigma_\alpha^k(z)) = P_k(\omega).
  \end{equation}
  Since $\omega$ is isolated in $1$-past equivalence, it is isolated in $k$-past equivalence, so $\sigma_\alpha^k(z) = \omega$.
  Therefore $z = \mu \omega = x$, and 
  \begin{equation} 
    \{ \iota_\alpha(x) \} = U(\mu \omega, k, k)
  \end{equation}
  which means that $\iota_\alpha(x)\in \tilde{\X}_\alpha$ is an isolated point.

  For the converse implication, suppose $x\notin \orb_\alpha(\omega)$.
  By \cref{lem:cover-isolated} we only need to verify that $\iota_\alpha(x)$ is not isolated,
  so take any basic open subset $U(z, k, l)$ containing $\iota_\alpha(x)$, for some $z\in \X_\alpha$ and $(k,l)\in \I$.
  Let $\mu$ be the unique word satisfying $\mu x\in \X_\alpha$ and $|\mu| = l - k$, and put $\nu \coloneqq x_{[0,r)}$.
  Since $\X_\alpha$ is recurrent, there exists $K\in \N$ such that $\sigma_\alpha^K(x) = \mu \nu x'$ for some $x'\in \X_\alpha$.
  Note that $x \neq \nu x'$ because $x$ is aperiodic.
  Now $\sigma_\alpha^{K + |\mu|}(x) = \nu x' \widesim{r,s} x$, so $\iota_\alpha(\nu x')\in U(z, r, s)$,
  and this implies that $\iota_\alpha(x)$ is not isolated.
\end{proof}

\begin{remark}
Let $\orb_\alpha^+(\omega_\alpha) \coloneqq \{ \sigma_\alpha^k(\omega_\alpha) : k\in \N \}$.
We have seen that $x\in \orb_\alpha^+(\omega_\alpha)$ exactly if $x$ is isolated in past equivalence (viz. $\sigma_\alpha^k(\omega_\alpha)$ is isolated in $(k + 1)$-past equivalence),
and $x\notin \orb_\alpha^+(\omega_\alpha)$ exactly if $x$ has a unique past.
\end{remark}

\begin{lemma} 
  The collection $\iota_\alpha(\orb_\alpha(\omega_\alpha))$ of isolated points is dense in $\tilde{\X}_\alpha$.
\end{lemma}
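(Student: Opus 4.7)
The plan is, for each nonempty basic open set $U(z, k, l) \subset \tilde{\X}_\alpha$, to produce some $y \in \orb_\alpha(\omega_\alpha)$ with $\iota_\alpha(y) \in U(z, k, l)$, equivalently $y \widesim{k, l} z$. If $z$ itself lies in $\orb_\alpha(\omega_\alpha)$, the choice $y \coloneqq z$ suffices, so the work lies entirely in the complementary case $z \notin \orb_\alpha(\omega_\alpha)$.

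Assuming $z \notin \orb_\alpha(\omega_\alpha)$, I would first observe that $\sigma_\alpha^k(z) \notin \orb_\alpha^+(\omega_\alpha)$; for otherwise $z$ would lie in $\sigma_\alpha^{-k}(\sigma_\alpha^j(\omega_\alpha)) \subset \orb_\alpha(\omega_\alpha)$ for some $j$. Combining this with the remark after \cref{lem:Sturmian-cover-isolated} that unique past is equivalent to avoiding $\orb_\alpha^+(\omega_\alpha)$, it follows that $\sigma_\alpha^k(z)$ has unique past and $P_l(\sigma_\alpha^k(z)) = \{\mu\}$ for a single length-$l$ word $\mu$. Moreover, $z_{[0, k)}$ is the (unique) length-$k$ past of $\sigma_\alpha^k(z)$, so truncating $\mu$ to its last $k$ symbols must recover $z_{[0, k)}$.

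I would then invoke minimality of $(\X_\alpha, \sigma_\alpha)$ to realise $\mu$ as a factor of the branch point itself: since $\omega_\alpha \in \X_\alpha$ and $\mu \in \LL(\X_\alpha)$, there exists $m_0 \geq 0$ with $\omega_{[m_0, m_0 + l)} = \mu$. Setting $y \coloneqq \sigma_\alpha^{m_0 + l - k}(\omega_\alpha) \in \orb_\alpha^+(\omega_\alpha) \subset \orb_\alpha(\omega_\alpha)$ and computing directly then gives $y_{[0, k)} = \omega_{[m_0 + l - k, m_0 + l)} = \mu_{[l - k, l)} = z_{[0, k)}$, while $\sigma_\alpha^k(y) = \sigma_\alpha^{m_0 + l}(\omega_\alpha)$ has unique length-$l$ past $\omega_{[m_0, m_0 + l)} = \mu$. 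Hence $y \widesim{k, l} z$.

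The only point that needs verification along the way is the explicit description $P_l(\sigma_\alpha^n(\omega_\alpha)) = \{\omega_{[n - l, n)}\}$ for $n \geq l$, which follows from the fact that the preimage chain of $\sigma_\alpha^n(\omega_\alpha)$ only branches once it reaches $\omega_\alpha$ at depth $n + 1$; this is essentially the backward analysis already used in \cref{lem:Sturmian-cover-isolated}. Modulo that calculation, the argument is entirely elementary and hinges only on minimality of the Sturmian system together with the characterisation of unique-past points.
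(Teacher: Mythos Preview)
Your proof is correct and follows essentially the same route as the paper's: reduce to the case $z \notin \orb_\alpha(\omega_\alpha)$, use that $\sigma_\alpha^k(z)$ then has a unique $l$-past $\mu$, locate $\mu$ in the forward orbit of $\omega_\alpha$ via minimality, and shift by $l-k$ to produce the required $y$. The paper phrases the location step as ``density of $\orb_\alpha(\omega_\alpha)$ plus recurrence'' rather than minimality, but this amounts to the same thing, and your verification that $P_l(\sigma_\alpha^{m_0+l}(\omega_\alpha)) = \{\mu\}$ via the single-branch-point structure is exactly what underlies the paper's unjustified assertion $P_l(\sigma_\alpha^l(z')) = \{\mu\}$.
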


\begin{proof}
  Take a basic open set $U(x,k,l)$ for some $x\in \X_\alpha$ and integers $0\leq k\leq l$.
  We shall find a point $z\in \orb_\alpha(\omega_\alpha)$ such that $x \widesim{k,l} z$;
  this will imply that $i_\alpha(z) \in \iota_\alpha(\orb_\alpha(\omega_\alpha)) \cap U(x,k,l)$.
  We may assume that $x\notin \orb_\alpha(\omega_\alpha)$ so, in particular, $P_l(\sigma_\alpha^k(x)) = \{ \mu\}$ for some word $\mu$.
  As $\orb_\alpha(\omega_\alpha)$ is dense in $\X_\alpha$, there is a point $z'\in \orb_\alpha(\omega_\alpha) \cap Z(\mu)$.
  Since $\X_\alpha$ is recurrent we may assume that $z' = \sigma_\alpha^K(\omega_\alpha)$ for some $K\geq 0$ so that $P_l(\sigma_\alpha^l(z')) = \{\mu\}$.
  Set $z \coloneqq \sigma_\alpha^{l-k}(z')$.
  Then $z_{[0,k)} = x_{[0,k)}$ and $P_l(\sigma_\alpha^k(z)) = P_l(\sigma_\alpha^l(z')) = \{\mu\}$,
  so $x \widesim{k,l} z$ as wanted.
\end{proof}

The next lemma is a well known result which shows that Sturmian subshifts satisfy Property $(\ast)$ of~\cite[Definition 3.1]{Carlsen-Eilers2004}.

\begin{lemma}[Property $(\ast)$] \label{lem:Property(ast)}
  For any word $\mu\in \LL_\alpha$ there is a point $x\in \X_\alpha$ such that $P_{|\mu|}(x) = \{ \mu \}$.
\end{lemma}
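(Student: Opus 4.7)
The plan is to exploit that $\sigma_\alpha$ fails to be injective only at the single branch point $\omega_\alpha$, and combine this with aperiodicity to bound $|P_n(x)|$ uniformly in $x$; the statement then reduces to a counting argument.

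\emph{Step 1 (past bounds).} Write $n = |\mu|$ and set $F_n \coloneqq \{\sigma_\alpha^k(\omega_\alpha) : 0 \leq k < n\}$. I would first verify that $|P_n(x)| = 1$ for $x \notin F_n$ and $|P_n(x)| = 2$ for $x \in F_n$. Since $|P_n(x)| = |\sigma_\alpha^{-n}(x)|$, this amounts to analysing the tree of $n$-fold preimages of $x$. The only node at which this tree can branch is $\omega_\alpha$, because $|\sigma_\alpha^{-1}(y)| = 1$ for every $y \neq \omega_\alpha$. Aperiodicity prevents $\omega_\alpha$ from appearing at two different levels of the tree: if $\sigma_\alpha^{k_1}(\omega_\alpha) = \sigma_\alpha^{k_2}(\omega_\alpha) = x$ with $k_1 < k_2$, then $\sigma_\alpha^{k_2-k_1}(x) = x$, contradicting aperiodicity. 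Hence there is at most one branching in the tree, yielding at most two leaves, and the branching occurs precisely when $x \in F_n$.

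\emph{Step 2 (cardinality argument).} Since $\mu$ is admissible, the cylinder $Z(\mu)$ is a nonempty clopen subset of the Cantor space $\X_\alpha$, and is therefore infinite. As $\sigma_\alpha^n$ has fibres of size at most two by Step 1, the image
\[
  \sigma_\alpha^n(Z(\mu)) = \{ x \in \X_\alpha : \mu x \in \X_\alpha \}
\]
is infinite as well, while $F_n$ is finite. I may therefore pick some
\[
  x \in \sigma_\alpha^n(Z(\mu)) \setminus F_n.
\]
By construction $\mu \in P_n(x)$, and by Step 1 $|P_n(x)| = 1$, giving $P_n(x) = \{\mu\}$ as required.

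The main obstacle is Step 1, in particular the aperiodicity argument showing that $\omega_\alpha$ can appear at only one level of the preimage tree; this is what makes the fibre bound $|P_n(x)| \leq 2$ uniform. Once this is in place, Step 2 is essentially a pigeonhole count against the finite exceptional set $F_n$.
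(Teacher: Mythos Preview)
Your argument is correct. Both steps go through: the preimage-tree analysis in Step~1 is sound (the aperiodicity computation $\sigma_\alpha^{k_2-k_1}(x)=\sigma_\alpha^{k_2-k_1}(\sigma_\alpha^{k_1}(\omega_\alpha))=\sigma_\alpha^{k_2}(\omega_\alpha)=x$ does give a periodic point, a contradiction), and the cardinality comparison in Step~2 is legitimate since $Z(\mu)$ is nonempty clopen in a Cantor set and $\sigma_\alpha^n$ has bounded fibres.

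The paper's proof is shorter and organised differently. Rather than bounding $|P_n(x)|$ and counting, it picks any $z\notin\orb_\alpha(\omega_\alpha)$, uses minimality to find $z'\in\orb_\alpha(z)\cap Z(\mu)$, and sets $x=\sigma_\alpha^{|\mu|}(z')$; since $x$ stays outside $\orb_\alpha(\omega_\alpha)$ it has a unique past, forcing $P_{|\mu|}(x)=\{\mu\}$. So the paper trades your finite exceptional set $F_n$ for the full (countable) orbit of $\omega_\alpha$ and replaces the cardinality argument by a direct appeal to minimality. Your route has the merit of being more quantitative (it actually computes $|P_n(x)|$ for every $x$, which the paper only states informally elsewhere), while the paper's route is a two-line application of minimality that avoids any tree bookkeeping. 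Either way the underlying idea is the same: find a point following $\mu$ that avoids the forward orbit of the branch point.
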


\begin{proof}
  Take any $z\notin \orb_\alpha(\omega_\alpha)$.
  As $\X_\alpha$ is minimal, there is $z'\in \orb_\alpha(z)$ such that $z'\in Z_\alpha(\mu)$.
  Then $x = \sigma_\alpha^{|\mu|}(z')\notin \orb_\alpha(\omega_\alpha)$ so $P_{|\mu|}(x) = \{\mu\}$. 
\end{proof}

We can use Property $(\ast)$ to understand the precise structure of the cover.
Loosely speaking, for every $x\in \X_\alpha$ each choice of past generates a distinct element in the cover.
This means that only points in the orbit $\orb_\alpha(\omega_\alpha)$ will have more than a single element in the corresponding fibre
because points outside of this orbit have unique pasts.

\begin{construction}\label{construction}
  Consider the branch point $\omega = \omega_\alpha$ which has exactly two distinct pasts, one corresponding to the prefix \texttt{0} and another to the prefix \texttt{1}.
  We shall explicitly construct an element $\tilde{x}\in \pi_\alpha^{-1}(\omega)$ corresponding to the prefix \texttt{0}; 
  the construction of an element $\tilde{x}'\in \pi_\alpha^{-1}(\omega)$ for the prefix \texttt{1} is analogous.

  We start by defining the representatives, so let $(k,l)\in \I$, i.e. $0 \leq k \leq l$ are integers.
  Take the unique word $\mu \texttt{0}\in \LL_\alpha$ satisfying $\mu \texttt{0} \omega\in \X_\alpha$ and $|\mu \texttt{0}| = l - k$,
  and choose $x_{(k,l)}\in \X_\alpha$ such that $P_l(x_{(k,l)}) = \{ \mu \texttt{0} \omega_{[0, k)}\}$, in accordance with Property $(\ast)$ (\cref{lem:Property(ast)}).
  Set
  \begin{equation} \label{eq:r,s-representatives}
    {}_k \tilde{x}_l \coloneqq \omega_{[0, k)} x_{(k,l)},
  \end{equation}
  and note that ${}_k\tilde{x}_l \not\widesim{k,l} \omega$.

  The notation indicates that each ${}_k\tilde{x}_l$ are representatives of a well defined element $\tilde{x}\in \pi_\alpha^{-1}(\omega)$, and this is what we will now verify.
  Given integers $0\leq k_1\leq l_1$ and $0\leq k_2\leq l_2$ satisfying $(k_1, l_1) \preceq (k_2, l_2)$ (i.e. $k_1 \leq k_2$ and $l_1 - k_1 \leq l_2 - k_2$),
  we must show that 
  \begin{equation}
    {}_{k_1}\tilde{x}_{l_1} \widesim{k_1, l_1} {}_{k_2}\tilde{x}_{l_2}.
  \end{equation}
  Choose the word $\mu \texttt{0}\in \LL_\alpha$ such that $\mu \texttt{0} \omega\in \X_\alpha$ and $|\mu \texttt{0}| = l_1 - k_1$.
  It is clear from the construction that 
  \begin{equation}
    ({}_{k_1}\tilde{x}_{l_1})_{[0, k_1)} = \omega_{[0, k_1)} = ({}_{k_2}\tilde{x}_{l_2})_{[0, k_1)},
  \end{equation}
  so it remains to verify that
  \begin{equation}
    P_{l_1}(\sigma_\alpha^{k_1}({}_{k_1}\tilde{x}_{l_1})) = P_{l_1}(\sigma_\alpha^{k_1}({}_{k_2}\tilde{x}_{l_2})).
  \end{equation}
  But this follows from the observations that
  \begin{equation}
    \sigma_\alpha^{k_1}({}_{k_1}\tilde{x}_{l_1}) = x_{(k_1, l_1)}, \qquad
    \sigma_\alpha^{k_1}({}_{k_2}\tilde{x}_{l_2}) = \omega_{[k_1, k_2)} x_{(k_2, l_2)},
  \end{equation}
  so 
  \begin{equation}
     P_{l_1}(\sigma_\alpha^{k_1}({}_{k_1}\tilde{x}_{l_1})) 
     = \{(\mu \texttt{0} \omega_{[0, k_1)})_{[k_1, k_1 + l_1)}\}
     = P_{l_1}(\sigma_\alpha^{k_1}({}_{k_2}\tilde{x}_{l_2})).
  \end{equation}
  Therefore, we have a well defined element $\tilde{x}\in \tilde{\X}_\alpha$ satisfying $\pi_\alpha(\tilde{x}) = \omega$.
  It is clear from the construction that $\tilde{x}$ is independent of the choice of $x_{(r,s)}$, and that $\tilde{x} \neq \iota_\alpha(\omega)$ so $\tilde{x}$ is not isolated.
\end{construction}

We are now ready to describe the precise structure of the cover.

\begin{theorem}\label{thm:cover}
  Let $\alpha\in (0,1)\setminus \Q$ and let $\X_\alpha$ be the associated one-sided Sturmian subshift.
  The canonical factor map $\pi_\alpha\colon \tilde{\X}_\alpha \to \X_\alpha$ is
  \begin{enumerate}[label=(\arabic*)]
    \item \label{i:cover3-to-1} three-to-one on the forward orbit $\orb_\alpha^+(\omega_\alpha) \coloneqq \{\sigma_\alpha^k(\omega_\alpha) : k\geq 0 \}$,
    \item \label{i:cover2-to-1} two-to-one on the backward orbit $\orb_\alpha^-(\omega_\alpha) \coloneqq \orb_\alpha(\omega_\alpha)\setminus \orb_\alpha^+(\omega_\alpha)$, and
    \item \label{i:cover1-to-1} one-to-one otherwise.
  \end{enumerate}
  Moreover, $\iota_\alpha(\orb_\alpha(\omega_\alpha)) \subset \tilde{\X}_\alpha$ are exactly the isolated points in $\tilde{\X}_\alpha$.
\end{theorem}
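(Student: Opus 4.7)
\medskip

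\noindent\textbf{Plan.} The final (``moreover'') assertion is immediate from combining \cref{lem:cover-isolated} (every isolated point of $\tilde{\X}_\alpha$ lies in $\iota_\alpha(\X_\alpha)$, at most one per fibre) with \cref{lem:Sturmian-cover-isolated} ($\iota_\alpha(x)$ is isolated precisely when $x \in \orb_\alpha(\omega_\alpha)$); together they identify the isolated points of $\tilde{\X}_\alpha$ as $\iota_\alpha(\orb_\alpha(\omega_\alpha))$.

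For the fibre counts, I first establish $|\pi_\alpha^{-1}(\omega_\alpha)| = 3$ and then reduce the other cases to it. The lower bound $|\pi_\alpha^{-1}(\omega_\alpha)| \geq 3$ is given by \cref{construction}, which produces two non-isolated elements $\tilde{x}, \tilde{x}' \in \pi_\alpha^{-1}(\omega_\alpha)$ distinct from each other and from $\iota_\alpha(\omega_\alpha)$. For the matching upper bound I exploit density of $\iota_\alpha(\orb_\alpha(\omega_\alpha))$ in $\tilde{\X}_\alpha$ (established just after \cref{lem:Sturmian-cover-isolated}): any $\tilde{z} \in \pi_\alpha^{-1}(\omega_\alpha)$ is a limit $\tilde{z} = \lim_n \iota_\alpha(z_n)$ with $z_n \in \orb_\alpha(\omega_\alpha)$, and continuity of $\pi_\alpha$ forces $z_n \to \omega_\alpha$. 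Such approaches split into three classes according to whether $z_n = \omega_\alpha$ eventually (limit $\iota_\alpha(\omega_\alpha)$), or the backward part of $z_n$ in $\Lambda_\alpha$ eventually matches the $\0$-extension of $\omega_\alpha$ (limit $\tilde{x}$), or it matches the $\1$-extension (limit $\tilde{x}'$). Hence $|\pi_\alpha^{-1}(\omega_\alpha)| = 3$.

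For case~(1), $x = \sigma_\alpha^n(\omega_\alpha)$: the map $\tilde{\sigma}_\alpha^n$ is a surjective local homeomorphism equivariant under $\pi_\alpha$, and $\omega_\alpha$ is the unique preimage of $x$ under $\sigma_\alpha^n$ (by aperiodicity), so $\tilde{\sigma}_\alpha^n$ restricts to a surjection $\pi_\alpha^{-1}(\omega_\alpha) \twoheadrightarrow \pi_\alpha^{-1}(x)$, which is injective by \cref{lem:BC-lemma}, hence a bijection and $|\pi_\alpha^{-1}(x)| = 3$. For case~(2), $x = \mu\omega_\alpha \in \orb_\alpha^-(\omega_\alpha)$ with $|\mu| = k \geq 1$: $\tilde{\sigma}_\alpha^k$ maps $\pi_\alpha^{-1}(x)$ injectively (by \cref{lem:BC-lemma}) into $\pi_\alpha^{-1}(\omega_\alpha) = \{\iota_\alpha(\omega_\alpha), \tilde{x}, \tilde{x}'\}$; of the three, only $\iota_\alpha(\omega_\alpha)$ and the ghost whose side agrees with $\mu_{k-1}$ admit preimages compatible with the prefix $\mu$ (the opposite ghost's single-past structure is inconsistent with the prescribed prefix), so the image has two elements and $|\pi_\alpha^{-1}(x)| \leq 2$. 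The matching lower bound comes from adapting \cref{construction} to use representatives with prefix $\mu$ and the side of $\omega_\alpha$ determined by $\mu_{k-1}$. For case~(3), $x \notin \orb_\alpha(\omega_\alpha)$: by density, $\tilde{z} = \lim \iota_\alpha(z_n)$ with $z_n \in \orb_\alpha(\omega_\alpha)$, $z_n \to x$; since $x \notin \orb_\alpha^+(\omega_\alpha)$, $x$ has unique past, and at each level $(k,l) \in \I$ the past sets $P_l(\sigma_\alpha^k(z_n))$ stabilise to the singleton $P_l(\sigma_\alpha^k(x))$ as $n \to \infty$ (using minimality of $(\X_\alpha, \sigma_\alpha)$ to align the backward neighbourhoods), so $\iota_\alpha(z_n) \to \iota_\alpha(x)$ and $\pi_\alpha^{-1}(x) = \{\iota_\alpha(x)\}$.

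The main obstacle is the density/limit analysis underpinning the upper bounds: for a sequence $z_n \in \orb_\alpha(\omega_\alpha)$ with $z_n \to x$, one must track whether the ``branch-point ambiguity'' in $z_n$ migrates to $+\infty$ (in which case past sets stabilise to the unique past of $x$, as required in case (3)) or to $-\infty$ (in which case they accumulate on one specific side of $\omega_\alpha$, as required in cases (1) and (2)). This classification is delicate but follows from minimality of the Sturmian system together with the structural fact that exactly two two-sided extensions of $\omega_\alpha$ exist in $\Lambda_\alpha$.
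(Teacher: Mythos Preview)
Your reduction of cases~(1) and~(2) to the fibre over $\omega_\alpha$ via $\tilde\sigma_\alpha$ and \cref{lem:BC-lemma} is a clean organisation and genuinely different from the paper, which instead adapts \cref{construction} pointwise along the orbit and argues the upper bound for each fibre separately. Your argument for (1) is correct (aperiodicity gives $\sigma_\alpha^{-n}(\sigma_\alpha^n(\omega_\alpha))=\{\omega_\alpha\}$, so $\tilde\sigma_\alpha^n$ is a bijection of fibres), and the idea for (2) is also sound: the opposite ghost cannot lie in $\tilde\sigma_\alpha^k(\pi_\alpha^{-1}(\mu\omega_\alpha))$, since at the $(0,1)$-level the image of any such element must contain $\mu_{k-1}$ in its $1$-past.

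The genuine gap lies in the density argument for the upper bound $|\pi_\alpha^{-1}(\omega_\alpha)|\le 3$ and for case~(3). Writing $\tilde z=\lim_n\iota_\alpha(z_n)$ only tells you that each $\widesim{k,l}$-class of $z_n$ eventually stabilises; it does not identify the limiting value. Your proposed trichotomy via ``the backward part of $z_n$ in $\Lambda_\alpha$'' is not well-posed: the $z_n$ may lie in $\orb_\alpha^+(\omega_\alpha)$, where they have two infinite pasts and no canonical two-sided lift. What must actually be shown is that once $P_1({}_0\tilde z_1)$ is fixed (three possibilities), the entire system of past-sets $P_l(\sigma_\alpha^k({}_k\tilde z_l))$ is determined. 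The paper proves this by working directly with representatives: if some representative has $|P_l|=2$, then isolation-in-past-equivalence of the points $\sigma_\alpha^n(\omega_\alpha)$, propagated across all $(k',l')\succeq(k,l)$, forces ${}_k\tilde z_l\widesim{k,l}\omega_\alpha$. Neither minimality nor the two-sided-extension count substitutes for this isolation argument, and your density framing does not bypass it. The same objection applies to case~(3): the assertion that $P_l(\sigma_\alpha^k(z_n))$ stabilises \emph{to} $P_l(\sigma_\alpha^k(x))$ is exactly the content to be proved, and the correct tool is the unique-past property of $x$ together with the projective compatibility (pushing $k'\to\infty$ forces any word in the limiting past-set to precede $\sigma_\alpha^k(x)$ in $\X_\alpha$), not minimality.
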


\begin{proof}
% Proof of (i) and (ii)
  Let $\omega = \omega_\alpha$ be the unique branch point.

  \labelcref{i:cover3-to-1} and \labelcref{i:cover2-to-1}:
  We know from \cref{lem:Sturmian-cover-isolated} that each $x\in \orb_\alpha(\omega)$ defines an isolated point $\iota_\alpha(x)$ in the cover and all isolated points arise like this.
  Every point in the forward orbit $x\in \orb_\alpha^+(\omega)$ has two distinct pasts, and an easy adaptation of the procedure in \cref{construction}
  defines two distinct and nonisolated elements in $\pi_\alpha^{-1}(x)$.
  Similarly, every point in the backward orbit $x\in \orb_\alpha^-(\omega)$ has a unique past, so the procedure of \cref{construction} defines a nonisolated element in $\pi_\alpha^{-1}(x)$.

% There are no more elements
  It remains to verify that there are no more elements in the fibres.
  We illustrate this for the case of the branch point $\omega = \omega_\alpha$ but the argument applies to every point in the orbit $\orb_\alpha(\omega_\alpha)$.
  If $\tilde{x}\in \pi_\alpha^{-1}(\omega)$ is not one of the elements we constructed above, then each representative ${}_k \tilde{x}_l$
  is the form ${}_k \tilde{x}_l = \omega_{[0,k)} x_{(k,l)}$ for some $x_{(k,l)}\in \X_\alpha$ satisfying $|P_l(x_{(k,l)})| = 2$.
  We show that ${}_k \tilde{x}_l = \omega$, and this will imply that $\tilde{x} = \iota_\alpha(\omega)$.

  Fix integers $0\leq k\leq l$.
  The condition $|P_l(x_{(k,l)})| = 2$ implies that 
  \begin{equation} 
    x_{(k,l)} = \sigma_\alpha^{n_{(k,l)}}(\omega),
  \end{equation}
  for some $n_{(k,l)}\in \N$.
  We will show that $n_{(k,l)} = k$ from which it follows that ${}_k\tilde{x}_l = \omega$.

  Consider integers $0\leq k' \leq l'$ with $(k,l)\preceq (k',l')$.
  The representative is of the form ${}_{k'}\tilde{x}_{l'} = \omega_{[0,k')} x_{(k',l')}$, and we have ${}_{k}\tilde{x}_{l} \widesim{k,l} {}_{k'}\tilde{x}_{l'}$, so
  \begin{equation}
    P_l(\sigma_\alpha^{n_{(k,l)}}(\omega)) = P_l(\omega_{[k,k')} x_{(k',l')}).
  \end{equation}
  Since $\sigma_\alpha^{n_{(k,l)}}(\omega)$ is isolated in $l$-past equivalence, we have $\sigma_\alpha^{n_{(k,l)}}(\omega) = \omega_{[k,k')} x_{(k',l')}$.
  In particular,
  \begin{equation} 
    \sigma_\alpha^{n_{(k,l)}}(\omega) \in Z_\alpha(\omega_{[k, k')}).
  \end{equation}
  As this reasoning applies whenever $(k,l) \preceq (k',l')$ we infer that $\sigma_\alpha^{n_{(k,l)}}(\omega) = \sigma_\alpha^k(\omega)$,
  and since $\omega$ is aperiodic we conclude $n_{(k,l)} = k$ as wanted.

  \labelcref{i:cover1-to-1}:
  Let $x\notin \orb_\alpha(\omega)$ and take $\tilde{x}\in \pi_\alpha^{-1}(x)$ so that ${}_k \tilde{x}_l = x_{[0, k)} x_{(k,l)}$, for some $x_{(k,l)}\in \X_\alpha$.
  We aim to show that ${}_k\tilde{x}_l \widesim{k,l} x$ from which it follows that $\tilde{x} = \iota_\alpha(x)$.
  It only remains to show that 
  \begin{equation}\label{eq:P_l(x_{(k,l)})}
    P_l(x_{(k,l)}) = P_l(\sigma_\alpha^k(x)).
  \end{equation}
  Since $x$ has unique past, we know that $P_l(\sigma_\alpha^k(x)) = \{ \mu x_{[0,k)}\}$, where $\mu\in \LL_\alpha$ is the unique prefix of $x$ with $|\mu| = l - k$.
  
  By \cref{lem:unique-past}, $\tilde{x}$ has a unique past so, in particular, there is a unique $\tilde{x}'\in \tilde{\X}_\alpha$ satisfying $\tilde{\sigma}_\alpha^{l - k}(\tilde{x}') = \tilde{x}$.
  This means that
  \begin{equation} 
    x_{[0,k)} x_{(k,l)} = {}_k\tilde{x}_l \widesim{k,l} \sigma_\alpha^{l - k}({}_{k + (l - k)}\tilde{x}'_l) = \sigma_\alpha^{l - k}({}_l\tilde{x}'_l),
  \end{equation}
  and since $\pi_\alpha(\tilde{x}') = \mu x$, we see that $\mu x_{[0, k)}\in P_l(x_{(k,l)})$.
  Since $P_l(x_{(k,l)})$ is a singleton this proves \labelcref{eq:P_l(x_{(k,l)})}.
\end{proof}

\begin{corollary}
  Let $\tilde{\Lambda}_\alpha \coloneqq \tilde{\X}_\alpha\setminus \iota_\alpha(\orb_\alpha(\omega_\alpha))$ be the collection of nonisolated elements in the cover.
  The restricted dynamical system $(\tilde{\Lambda}_\alpha, \tilde{\sigma}_\alpha)$ is conjugate to the two-sided Sturmian system $(\Lambda_\alpha, \sigma_\alpha)$.
  Consequently, there is an injective sliding block code $i\colon \Lambda_\alpha \to \tilde{\X}_\alpha$ such that $\rho_\alpha = \pi_\alpha\circ i$,
  where $\rho_\alpha$ is the canonical truncation.
 \end{corollary}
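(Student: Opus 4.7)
I will construct an explicit sliding block code $i\colon \Lambda_\alpha\to \tilde{\X}_\alpha$ and verify that it is a shift-equivariant homeomorphism onto $\tilde{\Lambda}_\alpha$. For $\bar{x}\in \Lambda_\alpha$ and $(k,l)\in \I$, the plan is to define the $(k,l)$-coordinate of $i(\bar{x})$ in the projective system as the $\widesim{k,l}$-class determined by the pair (prefix $\bar{x}_{[0,k)}$, past set $\{\bar{x}_{[k-l,k)}\}$). To show such a class exists I would invoke Property $(\ast)$ (\cref{lem:Property(ast)}): for $k\geq l$, applying it to $\bar{x}_{[0,k)}$ produces $x\in \X_\alpha$ with $P_k(x) = \{\bar{x}_{[0,k)}\}$, and then $z = \bar{x}_{[0,k)} x$ is the required representative; for $k<l$, applying it to $\bar{x}_{[k-l,k)}$ produces $y$ with $P_l(y) = \{\bar{x}_{[k-l,k)}\}$, and then $z = \sigma_\alpha^{l-k}(\bar{x}_{[k-l,k)} y)$ works. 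In either case the relevant concatenation is admissible because of the unique-past condition, exactly as in \cref{construction}.

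The hardest step will be verifying compatibility with the projective system: for $(k_1,l_1)\preceq (k_2,l_2)$, the representative $z_{(k_2,l_2)}$ must also represent the class specified at $(k_1,l_1)$. The prefixes manifestly agree, so the content is to show $P_{l_1}(\sigma_\alpha^{k_1}(z_{(k_2,l_2)})) = \{\bar{x}_{[k_1-l_1,k_1)}\}$. Using $z_{(k_2,l_2),[k_1,k_2)} = \bar{x}_{[k_1,k_2)}$, one has that $\mu\in P_{l_1}(\sigma_\alpha^{k_1}(z_{(k_2,l_2)}))$ if and only if $\mu \bar{x}_{[k_1,k_2)} \in P_{l_1+k_2-k_1}(\sigma_\alpha^{k_2}(z_{(k_2,l_2)}))$. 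Since $P_{l_2}(\sigma_\alpha^{k_2}(z_{(k_2,l_2)})) = \{\bar{x}_{[k_2-l_2,k_2)}\}$ is a singleton and $l_1+k_2-k_1 \leq l_2$ (by the definition of $\preceq$), and since shorter past sets arise by truncation from longer ones (valid because $\sigma_\alpha$ is surjective), $P_{l_1+k_2-k_1}(\sigma_\alpha^{k_2}(z_{(k_2,l_2)})) = \{\bar{x}_{[k_1-l_1,k_2)}\}$, which forces $\mu = \bar{x}_{[k_1-l_1,k_1)}$, as required.

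Once $i$ is well-defined, the remaining properties are straightforward. Continuity and the sliding-block code property follow because the $(k,l)$-coordinate of $i(\bar{x})$ depends only on the window $\bar{x}_{[\min(0,k-l),k)}$; shift-equivariance is a direct comparison of representatives using the formula for $\tilde{\sigma}_\alpha$; the identity $\pi_\alpha \circ i = \rho_\alpha$ is immediate from the choice of prefixes; and injectivity comes from the $k=0$ coordinates, where varying $l$ recovers the negative tail $\bar{x}_{(-\infty,0)}$ from the past sets while the prefixes determine $\bar{x}_{[0,\infty)}$.

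Finally, to see that $i(\bar{x}) \in \tilde{\Lambda}_\alpha$ and that $i$ is surjective onto $\tilde{\Lambda}_\alpha$: if $\rho_\alpha(\bar{x})\notin \orb_\alpha(\omega_\alpha)$, then all past sets of $\iota_\alpha(\rho_\alpha(\bar{x}))$ are singletons and agree with those prescribed by $i(\bar{x})$, so $i(\bar{x}) = \iota_\alpha(\rho_\alpha(\bar{x}))$ is nonisolated by \cref{lem:Sturmian-cover-isolated}; otherwise, the singleton past set chosen by $i(\bar{x})$ disagrees with the two-element past set of $\iota_\alpha(\rho_\alpha(\bar{x}))$ at some $(k,l)$, so $i(\bar{x}) \neq \iota_\alpha(\rho_\alpha(\bar{x}))$ and is nonisolated by \cref{lem:cover-isolated}. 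Surjectivity is then a fibre count: by \cref{thm:cover}, both $|\rho_\alpha^{-1}(x)|$ and $|\pi_\alpha^{-1}(x) \cap \tilde{\Lambda}_\alpha|$ equal $2$ on $\orb_\alpha^+(\omega_\alpha)$ and $1$ elsewhere, so the injection $i$, which respects projections, is fibrewise bijective and hence surjective. A continuous bijection between compact Hausdorff spaces is a homeomorphism, yielding the conjugacy.
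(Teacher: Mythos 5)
Your proof is correct, and at its core it implements the same idea as the paper: use the classification of fibres in \cref{thm:cover} to match each two-sided point $\bar{x}\in\Lambda_\alpha$ with the nonisolated element of $\pi_\alpha^{-1}(\rho_\alpha(\bar{x}))$ determined by the past of $\bar{x}$. The difference is in what gets proved explicitly. The paper first shows that $\tilde{\sigma}_\alpha$ restricted to $\tilde{\Lambda}_\alpha$ is injective by a unique-past argument in the cover, and then declares the conjugacy ``obvious'' from \cref{thm:cover}, describing it case by case without verifying well-definedness or continuity. You instead build the map coordinate-wise in the projective limit, prescribing the $(k,l)$-class by the prefix $\bar{x}_{[0,k)}$ and the singleton past $\{\bar{x}_{[k-l,k)}\}$, using Property $(\ast)$ (\cref{lem:Property(ast)}) and the surjectivity of $\sigma_\alpha$ (so that singleton long pasts truncate to singleton short pasts) to check compatibility; continuity then falls out of the finite-window dependence, and surjectivity onto $\tilde{\Lambda}_\alpha$ is a clean fibre count ($2$ versus $2$ on $\orb_\alpha^+(\omega_\alpha)$, $1$ versus $1$ elsewhere) rather than an appeal to the explicit description. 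This buys you a self-contained verification of the details the paper elides, and it makes the homeomorphism of $\tilde{\sigma}_\alpha|_{\tilde{\Lambda}_\alpha}$ a consequence of the conjugacy rather than a separate lemma; the paper's route is shorter on the construction side but leans more heavily on the reader accepting the ``obvious'' map. One small point worth keeping in your write-up: in the case $\rho_\alpha(\bar{x})\in\orb_\alpha^-(\omega_\alpha)$ the point $\rho_\alpha(\bar{x})$ itself has a unique past, yet $\iota_\alpha(\rho_\alpha(\bar{x}))$ is still isolated, so your argument that $i(\bar{x})\neq\iota_\alpha(\rho_\alpha(\bar{x}))$ correctly has to locate the two-element past set at a shifted coordinate $\sigma_\alpha^k(\rho_\alpha(\bar{x}))$ with $k$ large; as you state it this works, but it deserves the extra half-sentence.
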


\begin{proof}
  We know from~\cref{thm:cover} that $\iota_\alpha(\orb_\alpha(\omega)) = \orb_\alpha(\iota_\alpha(\omega))$ are exactly the isolated points of the cover.
  They form an open and invariant subset, so $\tilde{\Lambda}_\alpha$ is a closed invariant subset homeomorphic to the Cantor set.
  We will first show that $\tilde{\sigma}_\alpha$ restricted to $\tilde{\Lambda}_\alpha$ is a homeomorphism, and then construct a conjugacy $\phi\colon \Lambda_\alpha \to \tilde{\Lambda}_\alpha$.

  We show that $\tilde{\sigma}_\alpha|_{\tilde{\Lambda}_\alpha}$ is injective by showing that every element of $\tilde{\Lambda}_\alpha$ has a unique past.
  Every $x\notin \orb_\alpha^+(\omega)$ has a unique past, so any nonisolated $\tilde{x}\in \pi_\alpha^{-1}(x)$ has a unique past by~\cref{lem:unique-past}.
  It remains to verify that every nonisolated $\tilde{x}\in \pi_\alpha^{-1}(\sigma_\alpha^k(\omega))$ also has a unique past for all $k\in \N$.

  Fix $k\in \N$ and a nonisolated element $\tilde{x}\in \pi_\alpha^{-1}(\sigma_\alpha^k(\omega))$.
  Choose an integer $i > k$ and take $\tilde{z}\in \tilde{\sigma}_\alpha^{-i}(\tilde{x})$ corresponding to the \texttt{0}-past
  (the argument for the \texttt{1}-past is analogous).
  Then $\tilde{z}$ is not isolated, and
  \begin{equation*} 
    \pi_\alpha(\tilde{z}) = \mu \texttt{0} \omega
  \end{equation*}
  where $\mu \texttt{0}\in \LL_\alpha$ is the unique prefix of $\omega$ with $|\mu \texttt{0}| = i - k$.
  But there is a unique element satisfying these properties, so $\tilde{\sigma}_\alpha^{-i}(\tilde{x})$ is a singleton.
  As this applies to all $i > k$, we conclude that $\tilde{x}$ has a unique past.

  From the description of the cover $\tilde{\X}_\alpha$ in \cref{thm:cover}, there is an obvious conjugacy $\phi\colon \Lambda_\alpha\to \tilde{\Lambda}_\alpha$.
  Specifically, 
  \begin{itemize}
    \item if $\textrm{x}\in \Lambda_\alpha$ and $\textrm{x}_{[0,\infty)}\in \X_\alpha$ has a unique past (which is $\textrm{x}$ itself),
      then there is a unique nonisolated element $\tilde{x}\in \pi_\alpha^{-1}(\textrm{x}_{[0,\infty)})$, and $\phi(\textrm{x}) = \tilde{x}$;
    \item if $\textrm{x}\in \Lambda_\alpha$ and $\textrm{x}_{[0,\infty)}\in \X_\alpha$ does not have a unique past 
      (i.e. $\textrm{x}_{[0,\infty)} = \sigma_\alpha^k(\omega)$, for some $k\in \N$, so $\textrm{x}_{[-k,\infty)} = \omega$),
      and if $\textrm{x}_{-(k+1)} = \texttt{0}$, then there is a unique nonisolated element $\tilde{x}\in \pi_\alpha^{-1}(\sigma_\alpha^k(\omega))$
      corresponding to the \texttt{0}-past, and $\phi(\textrm{x}) = \tilde{x}$
      (if $\textrm{x}_{-(k+1)} = \texttt{1}$, then there is a unique nonisolated element $\tilde{x}'\in \pi_\alpha^{-1}(\sigma_\alpha^k(\omega))$
      corresponding to the \texttt{1}-past, and $\phi(\textrm{x}) = \tilde{x}'$).
  \end{itemize}
  Finally, the conjugacy $\phi\colon \Lambda_\alpha \to \tilde{\Lambda}_\alpha$ composed with the inclusion $\tilde{\Lambda}_\alpha \to \tilde{\X}_\alpha$ 
  is an injective sliding block code $i\colon \Lambda_\alpha \to \tilde{\X}_\alpha$.
  From the above description we see that $\rho_\alpha = \pi_\alpha\circ i$, where $\rho_\alpha\colon \Lambda_\alpha \to \X_\alpha$ is the canonical truncation.
\end{proof}

Sturmian subshifts are canonical examples of Cantor minimal systems.
However, we see that the cover associated to the (one-sided) Sturmian subshift cannot be minimal since it contains an invariant countable discrete subset.
Moreover, the shift operation on the cover is not expansive.
If it were, the cover would be conjugate to a shift of finite type and the Sturmian subshift would be sofic.

% *******************************************************************
\section{Groupoids and \texorpdfstring{$C^*$}{C*}-algebras of Sturmian systems} \label{sec:algebras}

In this section we associate a $C^*$-algebra $\OO_\alpha$ to the one-sided Sturmian system $(\X_\alpha, \sigma_\alpha)$ via a groupoid construction as in~\cite{Brix-Carlsen}.
We study their relation to the underlying dynamical systems as well as their intrinsic structure.
Finally, we show that the dynamic asymptotic dimension of the groupoid is one, and that the nuclear dimension of $\OO_\alpha$ is one. 

\begin{remark}
  The two-sided system $(\Lambda_\alpha, \sigma_\alpha)$ admits a transformation groupoid $\Lambda_\alpha\rtimes_{\sigma_\alpha} \Z$,
  and its groupoid $C^*$-algebra $C(\Lambda_\alpha)\rtimes \Z$ is well understood.
  This crossed product is a unital and simple $A\T$-algebra with real rank zero; it has stable rank one and a unique trace, 
  cf.~\cite[p. 60]{GPS1995} (see also~\cite{Putnam1990}) and~\cite[Proposition 4.2]{Putnam-Schmidt-Skau}.
\end{remark}

We shall now consider the $C^*$-algebra associated to the one-sided Sturmian system $(\X_\alpha, \sigma_\alpha)$ as in~\cite{Brix-Carlsen}.
The cover $(\tilde{\X}_\alpha, \tilde{\sigma}_\alpha)$ from~\cref{sec:cover} admits a natural topological groupoid construction (sometimes known as the Deaconu--Renault groupoid)
\begin{equation} 
  \G_\alpha \coloneqq \{ (\tilde{x}, p, \tilde{y})\in \tilde{\X}_\alpha\times \Z\times \tilde{\X}_\alpha \mid
  \exists k,l\in \N: \tilde{\sigma}_\alpha^k(\tilde{x}) = \tilde{\sigma}_\alpha^l(\tilde{y}),~p = k - l  \}.
\end{equation}
The product of $(\tilde{x},p, \tilde{y})$ and $(\tilde{y}', q, \tilde{z})$ is defined exactly if $\tilde{y} = \tilde{y}'$
in which case 
\[
  (\tilde{x},p, \tilde{y}) (\tilde{y}, q, \tilde{z}) = (\tilde{x}, p+q, \tilde{z}),
\]
and inversion is given by $(\tilde{x},p, \tilde{y})^{-1} = (\tilde{y}, -p, \tilde{x})$.
The unit space is canonically identified with $\tilde{\X}_\alpha$, and the source and range maps $s,r\colon \G_\alpha \to \tilde{\X}_\alpha$ are given as
$s(\tilde{x},p, \tilde{y}) = \tilde{y}$ and $r(\tilde{x},p, \tilde{y}) = \tilde{x}$, respectively.

A basis for a topology on $\G_\alpha$ is given by sets of the form
\[
  Z(\tilde{U}, m,n, \tilde{V}) \coloneqq \{(\tilde{x}, m - n, \tilde{y})\in \G_\alpha : \tilde{x}\in \tilde{U},~\tilde{y}\in \tilde{V},
  ~\tilde{\sigma}_\alpha^m(\tilde{x}) = \tilde{\sigma}_\alpha^n(\tilde{y})\},
\]
where $\tilde{U}, \tilde{V}\subset \tilde{X}_\alpha$ are open.
Equivalently, we can ask that $\tilde{\sigma}_\alpha^m|_{\tilde{U}}$ and $\tilde{\sigma}_\alpha^n|_{\tilde{V}}$ are homeomorphisms and
$\tilde{\sigma}_\alpha^m(\tilde{U}) = \tilde{\sigma}_\alpha^n(\tilde{V})$.
This is a second-countable, locally compact Hausdorff and \'etale groupoid~\cite[Lemma 3.1]{Sims-Williams},
and it is principal because $\X_\alpha$ (and hence $\tilde{\X}_\alpha$) contains no periodic points.
By~\cite[Lemma 3.5]{Sims-Williams} $\G_\alpha$ is amenable, so the reduced and full groupoid $C^*$-algebras coincide.

We define the $C^*$-algebra $\OO_\alpha$ of $(\X_\alpha, \sigma_\alpha)$ to be the groupoid $C^*$-algebra $C^*(\G_\alpha)$.
It carries a canonical \emph{diagonal} subalgebra $\D_\alpha \coloneqq C(\tilde{\X}_\alpha) \subset \OO_\alpha$
which is a Cartan subalgebra.
Moreover, the factor map $\pi_\alpha\colon \tilde{\X}_\alpha \to \X_\alpha$ induces an inclusion $C(\X_\alpha) \subset \D_\alpha$ in $\OO_\alpha$.
The $C^*$-algebra $\OO_\alpha$ also admits a canonical gauge action $\gamma^\alpha\colon \T \curvearrowright \OO_\alpha$
which is induced from the canonical cocycle $c\colon \G_\alpha \to \Z$ given by $c(\tilde{x}, p, \tilde{y}) = p$ (though we shall not need it here).

From our study of the cover in~\cref{sec:cover}, we immediately get the following result.
\begin{lemma} 
  Let $\alpha\in (0,1)\setminus \Q$.
  The groupoid $\G_\alpha$ of $(\X_\alpha, \sigma_\alpha)$ is ample, principal, and not minimal,
  and the unit space decomposes into invariant subsets $\tilde{\X}_\alpha = \orb_\alpha(\tilde{\omega}_\alpha) \cup \tilde{\Lambda}_\alpha$ such that
  \begin{enumerate}
    \item $\orb_\alpha(\tilde{\omega}_\alpha) \subset \tilde{\X}_\alpha$ is an open and invariant subset,
      and $\G_\alpha$ restricted to this open subset is (isomorphic to) the full equivalence relation on a countably infinite set; 
    \item $\tilde{\Lambda}_\alpha \subset \tilde{\X}_\alpha$ is a closed and invariant subset,
      and $\G_\alpha$ restricted to this closed subset is (isomorphic to) the crossed product $\Lambda_\alpha\rtimes \Z$.
  \end{enumerate}
\end{lemma}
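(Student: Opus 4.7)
The plan is to read the assertions off directly from the structural results already assembled in \cref{sec:cover}. The groupoid $\G_\alpha$ is \'etale by the Sims--Williams lemma cited above, and its unit space $\tilde{\X}_\alpha$ is totally disconnected (being a projective limit of finite sets), so $\G_\alpha$ is ample. Principality is the observation made just before the statement: $\X_\alpha$ is aperiodic, and \cref{lem:BC-lemma} prevents any $\tilde{x}\in \tilde{\X}_\alpha$ from satisfying $\tilde{\sigma}_\alpha^k(\tilde{x}) = \tilde{\sigma}_\alpha^l(\tilde{x})$ with $k\neq l$. Non-minimality will be automatic once we exhibit a proper open invariant subset of the unit space.

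For the decomposition I would set $\tilde{\omega}_\alpha \coloneqq \iota_\alpha(\omega_\alpha)$, so that \cref{thm:cover} identifies $\orb_\alpha(\tilde{\omega}_\alpha)$ with $\iota_\alpha(\orb_\alpha(\omega_\alpha))$, which is precisely the set of isolated points of $\tilde{\X}_\alpha$. Being a set of isolated points, it is open; invariance is then clear, since $\tilde{\sigma}_\alpha$ is a local homeomorphism (so it sends isolated points to isolated points) and the preimage analysis in the proof of the corollary following \cref{thm:cover} shows that the $\tilde{\sigma}_\alpha$-preimages of isolated points are again isolated. Its complement $\tilde{\Lambda}_\alpha$ is therefore closed and invariant, and the existence of either of these proper invariant subsets gives non-minimality.

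For part (1), since $\orb_\alpha(\tilde{\omega}_\alpha)$ is a single $\tilde{\sigma}_\alpha$-orbit, any two of its elements are connected by some arrow of $\G_\alpha$, and principality makes this arrow unique; hence $\G_\alpha|_{\orb_\alpha(\tilde{\omega}_\alpha)}$ is the full equivalence relation on this orbit, which is countably infinite by aperiodicity. For part (2), the corollary following \cref{thm:cover} supplies a conjugacy $\phi\colon (\Lambda_\alpha, \sigma_\alpha) \to (\tilde{\Lambda}_\alpha, \tilde{\sigma}_\alpha|_{\tilde{\Lambda}_\alpha})$; in particular $\tilde{\sigma}_\alpha$ restricts to a homeomorphism on $\tilde{\Lambda}_\alpha$. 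For a surjective local homeomorphism which is actually a homeomorphism the Deaconu--Renault groupoid collapses to the transformation groupoid of the generated $\Z$-action, because the relation $\tilde{\sigma}_\alpha^k(\tilde{x}) = \tilde{\sigma}_\alpha^l(\tilde{y})$ on $\tilde{\Lambda}_\alpha$ is equivalent to $\tilde{y} = \tilde{\sigma}_\alpha^{k-l}(\tilde{x})$. Transporting along $\phi$ yields the advertised isomorphism $\G_\alpha|_{\tilde{\Lambda}_\alpha} \cong \Lambda_\alpha\rtimes \Z$.

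I do not expect a serious obstacle here: the heavy lifting was carried out in \cref{thm:cover} and its corollary. The only points requiring care are translating between the fibred picture of isolated points and their orbit picture, and verifying that on a single orbit of a principal \'etale groupoid the restriction is literally the full pair equivalence relation — both of which are purely formal.
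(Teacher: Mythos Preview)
Your proposal is correct and follows exactly the approach the paper intends: the lemma is stated there without proof, preceded only by the sentence ``From our study of the cover in \cref{sec:cover}, we immediately get the following result,'' and your argument is precisely the unpacking of that sentence via \cref{thm:cover} and its corollary. The only point I would tighten is the identification $\orb_\alpha(\tilde{\omega}_\alpha) = \iota_\alpha(\orb_\alpha(\omega_\alpha))$, which the paper asserts explicitly in the proof of the corollary after \cref{thm:cover}; you may simply cite that rather than reconstructing it.
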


As a corollary, we can now recover an unpublished result of Carlsen~\cite[Theorem 8.18]{Carlsen2006} for Sturmian subshifts
in a form where we understand the maps already at the levels of the underlying dynamical systems and with a different proof.

\begin{corollary} \label{cor:extension}
  For $\alpha\in (0,1)\setminus \Q$ there is a commutative diagram
 % In the tikzcd below:
 % quotes are activated - note that ' places label in opposite location.
  \begin{equation}
    \begin{tikzcd}
      0 \arrow[r] & c_0 \arrow[r]\arrow[d] & \D_\alpha \arrow[r]\arrow[d] & C^*(\Lambda_\alpha) \arrow[r]\arrow[d] & 0 \\
      0 \arrow[r] & \K \arrow[r] & \OO_\alpha \arrow[r] & C(\Lambda_\alpha)\rtimes \Z \arrow[r] & 0
    \end{tikzcd}
  \end{equation}
  where the rows are short exact and the vertical arrows are canonical inclusions.
\end{corollary}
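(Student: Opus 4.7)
The plan is to apply the standard short exact sequence associated to an open invariant subset of the unit space of an étale groupoid, and then restrict the whole picture to the Cartan subalgebra $\D_\alpha \subset \OO_\alpha$ to obtain the top row.

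First I would invoke the preceding lemma, which decomposes $\tilde{\X}_\alpha$ as the disjoint union of the open invariant set $\orb_\alpha(\tilde{\omega}_\alpha)$ and the closed invariant Cantor set $\tilde{\Lambda}_\alpha$. For any étale groupoid $\G$ with open invariant $U \subset \G^{(0)}$ and closed complement $F$, one has a canonical short exact sequence $0 \to C^*(\G|_U) \to C^*(\G) \to C^*(\G|_F) \to 0$, where the ideal is generated by $C_c(\G|_U)$ extended by zero and the quotient map is restriction of functions. Applied to $\G_\alpha$ this yields the bottom row, provided we identify the two outer algebras. The restriction $\G_\alpha|_{\orb_\alpha(\tilde{\omega}_\alpha)}$ is, by the lemma, the full equivalence relation on a countably infinite set, whose (reduced = full, by amenability) groupoid $C^*$-algebra is isomorphic to $\K$ via the standard picture $\K \cong C^*(X \times X)$ with matrix units indexed by pairs. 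The restriction $\G_\alpha|_{\tilde{\Lambda}_\alpha}$ is the transformation groupoid $\Lambda_\alpha \rtimes \Z$ through the conjugacy $\phi\colon \Lambda_\alpha \to \tilde{\Lambda}_\alpha$ constructed in the previous section, and hence its $C^*$-algebra is $C(\Lambda_\alpha) \rtimes \Z$.

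For the top row I would observe that the same open/closed decomposition of $\tilde{\X}_\alpha$, now viewed only at the level of unit spaces, gives the short exact sequence of commutative $C^*$-algebras $0 \to C_0(\orb_\alpha(\tilde{\omega}_\alpha)) \to C(\tilde{\X}_\alpha) \to C(\tilde{\Lambda}_\alpha) \to 0$, and the left-hand term is $c_0$ because the orbit is a countable discrete space, while the right-hand term is $C(\Lambda_\alpha)$ via $\phi$. This is exactly the top row (reading $C^*(\Lambda_\alpha)$ as $C(\Lambda_\alpha)$).

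Finally, commutativity of the diagram is automatic from the construction: the vertical maps are the canonical inclusions $C_0(U) \hookrightarrow C^*(\G_\alpha|_U)$, $C(\tilde{\X}_\alpha) \hookrightarrow C^*(\G_\alpha)$, and $C(\tilde{\Lambda}_\alpha) \hookrightarrow C^*(\G_\alpha|_{\tilde{\Lambda}_\alpha})$ as diagonal subalgebras, and the quotient and ideal maps in both rows come from the same open/closed decomposition of the unit space, so they intertwine the inclusions by construction. The only point requiring care will be checking that restricting a diagonal function $f \in \D_\alpha$ to the closed invariant set $\tilde{\Lambda}_\alpha$ agrees under the identification $C(\tilde{\Lambda}_\alpha) \cong C(\Lambda_\alpha)$ with the image of $f$ under the quotient $\OO_\alpha \to C(\Lambda_\alpha)\rtimes\Z$; this is immediate from the fact that the latter quotient is induced by the groupoid restriction and that $\phi$ is a genuine conjugacy, so I do not expect any real obstacle beyond bookkeeping.
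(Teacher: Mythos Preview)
Your proposal is correct and follows essentially the same approach as the paper: invoke the open/closed invariant decomposition of the unit space from the preceding lemma, apply the standard short exact sequence for \'etale groupoid $C^*$-algebras, and identify the two restricted groupoids with the full equivalence relation (giving $\K$) and the transformation groupoid $\Lambda_\alpha\rtimes\Z$ (giving the crossed product). The paper's own proof is terser and does not spell out the top row or the commutativity check, but your additional bookkeeping is accurate and adds no new ideas.
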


\begin{proof}
  The groupoid $\G_\alpha$ restricted to the open invariant subset $\orb_\alpha(\tilde{\omega}_\alpha)$ is the full equivalence relation on a countably infinite set,
  so its $C^*$-algebra is isomorphic to the compact operators, cf.~e.g~\cite[Example 9.3.7]{Sims-notes}.
  Furthermore, the groupoid restricted to $\tilde{\Lambda}_\alpha$ on which the dynamics is reversible, is isomorphic to the transformation groupoid $\Lambda_\alpha\rtimes \Z$
  whose $C^*$-algebra is the crossed product $C(\Lambda_\alpha)\rtimes \Z$.
  The fact that the horizontal sequences are exact follows, e.g. from~\cite[Proposition 10.3.2]{Sims-notes}.
\end{proof}

The compact operators is the unique ideal of $\OO_\alpha$.

\begin{remark}
  The $K$-theory of $\OO_\alpha$ was computed in~\cite[Example 5.3]{Carlsen-Eilers2006} and~\cite[Corollary 4.1]{Eilers-Restorff-Ruiz} to be
  \begin{equation} 
    K_0(\OO_\alpha) \cong \Z + \alpha \Z, \quad \textrm{and} \quad K_1(\OO_\alpha) = 0.
  \end{equation}
  The first isomorphism is of ordered groups when $\Z + \alpha \Z$ inherits the order of the real line.
\end{remark}

We can now relate existence of $^*$-isomorphisms or Morita equivalence between these $C^*$-algebras to the dynamics of the underlying subshifts
by using the results of~\cref{sec:dynamics}.
Note that similarity with~\cite[Theorems 2 and 4]{Rieffel1981}

\begin{proposition} 
  Let $\alpha, \beta\in (0,1)\setminus \Q$.
  The two-sided Sturmian systems $(\Lambda_\alpha, \sigma_\alpha)$ and $(\Lambda_\beta, \sigma_\beta)$ are two-sided conjugate 
  (equivalently, the one-sided Sturmian subshifts $(\X_\alpha, \sigma_\alpha)$ and $(\X_\beta, \sigma_\beta)$ are one-sided conjugate) if and only if 
  the $C^*$-algebras $\OO_\alpha$ and $\OO_\beta$ are $^*$-isomorphic if and only if
  $\alpha = \beta$ or $\alpha = 1 - \beta$.
\end{proposition}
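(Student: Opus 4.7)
My plan is to close a three-way equivalence by combining \cref{thm:two-sided-conjugacy} with a functorial passage from Sturmian subshifts to groupoids and a $K$-theoretic invariant. \cref{thm:two-sided-conjugacy} already shows that $\alpha = \beta$ or $\alpha = 1-\beta$ is equivalent to two-sided conjugacy of $(\Lambda_\alpha,\sigma_\alpha)$ and $(\Lambda_\beta,\sigma_\beta)$. The parenthetical equivalence with \emph{one-sided} conjugacy is immediate in both directions: a one-sided conjugacy $\X_\alpha \to \X_\beta$ lifts uniquely to a conjugacy of the projective limits $\Lambda_\alpha$ and $\Lambda_\beta$ by functoriality of $\varprojlim$, while the coordinate-swap $t \mapsto 1-t$ on $\T$ already implements a one-sided conjugacy $\X_\alpha \to \X_{1-\alpha}$, so the two-sided classification is realised on the one-sided systems as well.

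For the implication from one-sided conjugacy to $^*$-isomorphism, I intend to exploit functoriality of the cover construction. A conjugacy $h\colon \X_\alpha \to \X_\beta$ preserves admissible words on the common alphabet $\{\0, \1\}$ and hence intertwines the past-equivalence relations $\widesim{k,l}$ on $\X_\alpha$ and $\X_\beta$ for every $(k,l)\in \I$. This promotes $h$ to a compatible family of bijections between the finite quotients defining the projective systems, and thus to a conjugacy $\tilde{h}\colon \tilde{\X}_\alpha \to \tilde{\X}_\beta$ intertwining the local homeomorphisms $\tilde{\sigma}_\alpha$ and $\tilde{\sigma}_\beta$. This in turn induces an isomorphism of the Deaconu--Renault groupoids $\G_\alpha \cong \G_\beta$, and hence a $^*$-isomorphism $\OO_\alpha = C^*(\G_\alpha) \cong C^*(\G_\beta) = \OO_\beta$.

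For the converse I use ordered $K$-theory with order unit. Since $\tilde{\X}_\alpha$ is compact, $\D_\alpha = C(\tilde{\X}_\alpha) \subset \OO_\alpha$ is unital and therefore so is $\OO_\alpha$; the pair $(K_0(\OO_\alpha), [1_{\OO_\alpha}])$ is then a $^*$-isomorphism invariant among ordered abelian groups with distinguished order unit. By the $K$-theory computation cited in the preceding remark, this invariant is $(\Z + \alpha \Z, 1)$, where the right-hand side carries the order inherited from $\mathbb{R}$ and order unit $1$. Hence $\OO_\alpha \cong \OO_\beta$ forces $(\Z + \alpha \Z, 1) \cong (\Z + \beta \Z, 1)$, and the implication \labelcref{i:dimension-groups-unit}$\Rightarrow$\labelcref{i:alpha-beta} of \cref{thm:two-sided-conjugacy} then yields $\alpha = \beta$ or $\alpha = 1 - \beta$, closing the loop.

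The step I expect to require the most care is lifting a one-sided conjugacy of the subshifts to a conjugacy of their covers. Although functoriality of the projective-limit construction is conceptually straightforward, one must verify explicitly that a conjugacy preserves the combinatorial data (pasts and cylinder structure) used to define $\widesim{k,l}$. As a shortcut, one can instead appeal to the lifting result~\cite[Lemma 6.3]{Brix-Carlsen} already invoked in~\cref{sec:dynamics}, which promotes any continuous orbit equivalence to a continuous orbit equivalence of the covers and specialises to a conjugacy in the conjugacy case; verifying that the referenced $K$-theory isomorphism sends $[1_{\OO_\alpha}]$ to $1 \in \Z + \alpha \Z$ is a secondary technicality that should be transparent from the proof in \cite{Carlsen-Eilers2006}.
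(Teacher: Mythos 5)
Your proposal matches the paper's argument in substance: the forward direction is delegated to the lifting machinery of Brix--Carlsen (the paper simply cites \cite[Theorem 4.4]{Brix-Carlsen} for a diagonal-preserving, gauge-intertwining $^*$-isomorphism rather than re-deriving cover functoriality), and the converse uses ordered $K_0$ with distinguished order unit together with the implication \labelcref{i:dimension-groups-unit}$\Rightarrow$\labelcref{i:alpha-beta} of \cref{thm:two-sided-conjugacy}, exactly as you describe. The one caveat is that a one-sided conjugacy is a sliding block code and does not literally preserve the relations $\widesim{k,l}$ for the same indices $(k,l)$, so the ``shortcut'' you mention of invoking the Brix--Carlsen lifting results is in fact the necessary route --- which is precisely what the paper does.
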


\begin{proof}
  The two-sided subshifts $(\Lambda_\alpha, \sigma_\alpha)$ and $(\Lambda_\beta, \sigma_\beta)$ are conjugate if and only if 
  the one-sided subshifts $(\X_\alpha, \sigma_\alpha)$ and $(\X_\beta, \sigma_\beta)$ are one-sided conjugate,
  and this happens exactly when $\alpha = \beta$ or $\alpha = 1 - \beta$.
  By~\cite[Theorem 4.4]{Brix-Carlsen} there is a $^*$-isomorphism from $\OO_\alpha$ and $\OO_\beta$ which is diagonal-preserving and intertwines the gauge actions. 
  Conversely, if $\OO_\alpha$ and $\OO_\beta$ are $^*$-isomorphic, then their $K$-theory agrees;
  in fact, $\Z + \alpha \Z$ and $\Z + \beta \Z$ are order isomorphic in a way which preserves the distinguished unit. 
  Therefore, $\alpha = \beta$ or $\alpha = 1 - \beta$.
\end{proof}

\begin{proposition} 
  Let $\alpha, \beta\in (0,1)\setminus \Q$.
  The two-sided Sturmian systems $(\Lambda_\alpha, \sigma_\alpha)$ and $(\Lambda_\beta, \sigma_\beta)$ are flow equivalent if and only if 
  $\OO_\alpha$ and $\OO_\beta$ are Morita equivalent if and only if
  $\alpha \sim \beta$ (equivalence of irrationals).
\end{proposition}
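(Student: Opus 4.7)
The equivalence ``flow equivalent $\iff \alpha \sim \beta$'' has already been established in the preceding proposition, so the plan is to insert ``$\OO_\alpha$ and $\OO_\beta$ are Morita equivalent'' as a middle term in this chain.

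For the implication $\OO_\alpha \sim_M \OO_\beta \Rightarrow \alpha \sim \beta$, I would invoke $K$-theory. Morita equivalence preserves $K_0$ as an ordered abelian group, though without preserving a distinguished order unit. From the remark above, $K_0(\OO_\alpha) \cong \Z + \alpha \Z$ as ordered abelian groups, hence $\Z + \alpha \Z \cong \Z + \beta \Z$ as ordered groups, and the dimension-group characterisation of flow equivalence in the earlier proposition yields $\alpha \sim \beta$.

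For the converse $\alpha \sim \beta \Rightarrow \OO_\alpha \sim_M \OO_\beta$ I would argue at the groupoid level. Flow equivalence of the minimal Cantor systems $(\Lambda_\alpha, \sigma_\alpha)$ and $(\Lambda_\beta, \sigma_\beta)$ produces a groupoid equivalence between the transformation groupoids $\Lambda_\alpha \rtimes \Z$ and $\Lambda_\beta \rtimes \Z$ (for instance via the common suspension in the spirit of Parry--Sullivan, or via a Kakutani tower realising the two unit spaces as clopen full transversals of a shared mapping torus). By the corollary identifying the non-isolated part of the cover with $\Lambda_\alpha$, this gives an equivalence of the closed invariant subgroupoids $\G_\alpha|_{\tilde{\Lambda}_\alpha}$ and $\G_\beta|_{\tilde{\Lambda}_\beta}$. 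I would then extend it to a full $(\G_\alpha, \G_\beta)$-equivalence bimodule by glueing in a bijection between the complementary countable discrete orbits $\orb_\alpha(\tilde{\omega}_\alpha)$ and $\orb_\beta(\tilde{\omega}_\beta)$, each of which is an elementary groupoid contributing $\K$. Passing to $C^*$-algebras via the groupoid equivalence theorem then yields $\OO_\alpha \sim_M \OO_\beta$.

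The main obstacle is topological: the isolated orbits accumulate on the Cantor sets $\tilde{\Lambda}_\alpha$ and $\tilde{\Lambda}_\beta$, so the gluing has to be performed carefully to produce a locally compact Hausdorff equivalence space carrying free, proper, transitive actions on both sides. A potentially cleaner route, which I would try in parallel, is to construct a continuous orbit equivalence of the covers $\tilde{\X}_\alpha$ and $\tilde{\X}_\beta$ directly from $\alpha \sim \beta$; such an equivalence is allowed to move isolated points and so is not subject to the rigidity of the one-sided subshifts proved earlier, and by a Brix--Carlsen type reconstruction theorem it would immediately translate into a diagonal-preserving Morita equivalence of $\OO_\alpha$ and $\OO_\beta$.
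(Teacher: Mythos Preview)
Your backward implication ($\OO_\alpha \sim_M \OO_\beta \Rightarrow \alpha \sim \beta$ via ordered $K_0$) is exactly what the paper does.

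For the forward direction the paper takes a much shorter route than you: it simply invokes the general result from \cite{Brix-Carlsen} that flow equivalence of the two-sided subshifts implies a diagonal-preserving $^*$-isomorphism $\OO_\alpha \otimes \K \cong \OO_\beta \otimes \K$, hence Morita equivalence. No groupoid-level construction is carried out in the paper itself.

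Your first proposed construction has a genuine gap that you partly anticipate but perhaps underestimate. Having a groupoid equivalence between the closed subgroupoids $\G_\alpha|_{\tilde{\Lambda}_\alpha} \simeq \Lambda_\alpha \rtimes \Z$ and $\G_\beta|_{\tilde{\Lambda}_\beta} \simeq \Lambda_\beta \rtimes \Z$, together with an isomorphism of the open full-equivalence-relation pieces, does \emph{not} by itself yield an equivalence of $\G_\alpha$ and $\G_\beta$. This is the groupoid analogue of the fact that Morita equivalence of the ideal and of the quotient in a $C^*$-extension does not force Morita equivalence of the middle terms: one must match the extension classes. Concretely, the bijection you pick between the isolated orbits must be compatible with how those orbits accumulate on $\tilde{\Lambda}_\alpha$ and $\tilde{\Lambda}_\beta$, and an arbitrary Kakutani-type equivalence of the Cantor pieces gives you no control over this. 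So the ``glueing'' step is not a technicality to be tidied up; it is the whole content of the implication.

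Your second route---producing a continuous orbit equivalence (or stabilised version thereof) of the covers directly from $\alpha \sim \beta$---is in spirit what the cited Brix--Carlsen machinery does, and is the correct place to aim; but as written it is only a hope, not an argument. If you want to avoid citing the black-box result, you would need to actually exhibit the cover-level map, which amounts to reproving the relevant theorem of \cite{Brix-Carlsen} in this special case.
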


\begin{proof}
  The systems $(\Lambda_\alpha, \sigma_\alpha)$ and $(\Lambda_\beta, \sigma_\beta)$ are flow equivalent exactly when $\alpha$ and $\beta$ are equivalent irrationals. 
  Flow equivalence implies the existence of a $^*$-isomorphism from $\OO_\alpha \otimes \K$ to $\OO_\beta \otimes \K$ which is diagonal-preserving;
  in particular, $\OO_\alpha$ and $\OO_\beta$ are Morita equivalent.
  Conversely, if $\OO_\alpha$ and $\OO_\beta$ are Morita equivalent, then their ordered $K$-theory agrees, so the irrationals $\alpha$ and $\beta$ are equivalent.
\end{proof}

These results tell us that the class $\{ \OO_\alpha \}_\alpha$ of $C^*$-algebras associated to Sturmian systems contains uncountably many non-$^*$-isomorphic (even non-Morita equivalent) $C^*$-algebras,
and the ordered $K$-theory classifies them up to Morita equivalence, cf.~\cite[Corollary 4.1]{Eilers-Restorff-Ruiz}.

\begin{remark}
  The diagonal subalgebra $\D_\alpha$ inside $\OO_\alpha$ is a Cartan subalgebra in the sense of \cite{Renault2008} and it is unique in the following sense:
  if $\OO_\alpha$ and $\OO_\beta$ are $^*$-isomorphic, then the systems $(\X_\alpha, \sigma_\alpha)$ and $(\X_\beta, \sigma_\beta)$ are conjugate (by the above result)
  so there is a diagonal-preserving isomorphism from $\OO_\alpha$ to $\OO_\beta$, i.e. a $^*$-isomorphism which maps the diagonal onto the diagonal, cf.~\cite[Theorem 4.1]{Brix-Carlsen}.
  This is reminiscent of AF-algebras and not something to expect among Cantor minimal system, cf.~\cite[pp. 65--66]{GPS1995}.

  Moreover, we may deduce a \emph{stable $^*$-isomorphism implies $^*$-isomorphism} result.
  If a $^*$-isomorphism from $\OO_\alpha\otimes \K$ to $\OO_\beta\otimes \K$ which maps $C(\X_\alpha)\otimes c_0$ onto $C(\X_\beta)\otimes c_0$
  can be chosen to also intertwine the gauge actions $\gamma^\alpha\otimes \id$ and $\gamma^\beta\otimes \id$,
  then $(\Lambda_\alpha, \sigma_\alpha)$ and $(\Lambda_\beta, \sigma_\beta)$ are conjugate (cf.~\cite[Corollary 7.6]{Brix-Carlsen}) so $\OO_\alpha$ and $\OO_\beta$ are $^*$-isomorphic.
\end{remark}

\begin{lemma} 
  The $C^*$-algebra $\OO_\alpha$ is infinite (though not properly infinite), has real rank zero, and stable rank two for any $\alpha\in (0,1)\setminus \Q$.
\end{lemma}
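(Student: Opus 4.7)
The plan is to extract all four properties from the extension $0 \to \K \to \OO_\alpha \to C(\Lambda_\alpha) \rtimes \Z \to 0$ provided by \cref{cor:extension}, combining it with what is already known about the quotient (a simple unital $A\T$-algebra with real rank zero, stable rank one, and a unique tracial state).

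\emph{Infinite but not properly infinite.} From the description of the cover in \cref{thm:cover}, the local homeomorphism $\tilde{\sigma}_\alpha$ is two-to-one at exactly one point, the isolated $\iota_\alpha(\omega_\alpha)$ (with preimages $\iota_\alpha(\0\omega_\alpha)$ and $\iota_\alpha(\1\omega_\alpha)$), and one-to-one everywhere else. Consequently, on the clopen subset $U := \tilde{\X}_\alpha \setminus \{\iota_\alpha(\1\omega_\alpha)\}$, the shift restricts to a homeomorphism $U \to \tilde{\X}_\alpha$, and the indicator of the corresponding compact open bisection defines $v \in \OO_\alpha$ with $vv^* = 1$ and $v^*v = \mathbf{1}_U < 1$. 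Hence $v^*$ is a non-unitary isometry and $\OO_\alpha$ is infinite. Proper infiniteness is excluded by pulling back the unique tracial state of $C(\Lambda_\alpha) \rtimes \Z$ along the quotient map to obtain a tracial state $\tau$ on $\OO_\alpha$ with $\tau(1) = 1$; two isometries with orthogonal ranges would force the contradiction $2\tau(1) \leq \tau(1)$.

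\emph{Real rank zero.} By the Brown--Pedersen extension theorem, $\OO_\alpha$ inherits real rank zero from $\K$ (AF, hence $\mathrm{RR}0$) and $C(\Lambda_\alpha) \rtimes \Z$ (given), provided every projection in the quotient lifts to a projection in $\OO_\alpha$. The obstruction to such lifting is the index map into $K_1(\K) = 0$, so lifting is automatic and $\OO_\alpha$ has real rank zero.

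\emph{Stable rank two.} Rieffel's bound for the stable rank of an extension gives $\mathrm{sr}(\OO_\alpha) \leq \max\{\mathrm{sr}(\K),\, \mathrm{sr}(C(\Lambda_\alpha)\rtimes\Z),\, \mathrm{csr}(C(\Lambda_\alpha)\rtimes\Z)\} \leq 2$, since both ideal and quotient have stable rank one and the connected stable rank of a unital stable rank one algebra is at most two. In the other direction, stable rank one would force every one-sided invertible element to be invertible, which is incompatible with the non-unitary isometry $v^*$ produced above; hence $\mathrm{sr}(\OO_\alpha) = 2$. The main obstacle I anticipate is pinning down the precise form of the two extension theorems invoked (for real rank zero and for stable rank), but once these are cited, everything else reduces to the short groupoid calculation on $U$ above together with the $K$-theoretic fact that $K_1(\K) = 0$.
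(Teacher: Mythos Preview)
Your proof is correct and takes essentially the same approach as the paper's: both extract everything from the extension of \cref{cor:extension}, cite Brown--Pedersen for real rank zero and Rieffel for the stable rank bound, and exhibit a proper isometry as the indicator of an explicit bisection in $\G_\alpha$ whose source and range are $\tilde{\X}_\alpha$ and $\tilde{\X}_\alpha \setminus \{\iota_\alpha(\1\omega_\alpha)\}$. The only differences are cosmetic---your bisection is simply the shift restricted to $\tilde{\X}_\alpha \setminus \{\iota_\alpha(\1\omega_\alpha)\}$ (arguably cleaner than the paper's construction, which is the identity on most of the unit space and shifts only along one tail of the isolated orbit), and you rule out proper infiniteness via a pulled-back tracial state rather than via the ordered structure on $K_0(\OO_\alpha)$.
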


\begin{proof}
  The fact that $\OO_\alpha$ has real rank zero follows from~\cite[Theorem 3.14]{Brown-Pedersen}.
  Since both the compacts and the crossed product has stable rank one, it follows from~\cite[Corollary 4.12]{Rieffel1983} and \cref{cor:extension} that the stable rank of $\OO_\alpha$ is one or two.
  The existence of an infinite projection in $\OO_\alpha$ shows that it is an infinite $C^*$-algebra and that the stable rank must be two 
  (stable rank one would imply that $\OO_\alpha$ were stably finite).

  Let $B \subset \G_\alpha$ be the union of $\tilde{\Lambda}_\alpha$, $\{ \iota_\alpha(\sigma_\alpha^{-k}(\texttt{0} \omega_\alpha) : k\in \N)\}$,
  $\{ \iota_\alpha(\sigma_\alpha^k(\omega_\alpha)) : k \in \N \}$, and
  \begin{equation} 
    \{ (\iota_\alpha(\nu 1 \omega_\alpha), 1, \iota_\alpha(\sigma_\alpha(\nu 1 \omega_\alpha))) : |\nu| \geq 1 \}.
  \end{equation}
  Then $B$ is an open bisection with $s(B) = \tilde{\X}_\alpha$ and $r(B) = \tilde{\X}_\alpha \setminus \{ \texttt{1} \tilde{\omega}_\alpha\}$.
  Therefore, the indicator function $\chi_B \in \OO_\alpha$ is a proper isometry, so $\OO_\alpha$ is infinite.
  Since $K_0(\OO_\alpha)$ is an ordered group, $\OO_\alpha$ is not properly infinite.
\end{proof}

Finally we turn our attention to the nuclear dimension for $C^*$-algebras.
This was introduced in~\cite{Winter-Zacharias} as a noncommutative analogue of topological covering dimension,
and it has proven to be immensely valuable to the classification programme.
Since $(\Lambda_\alpha, \sigma_\alpha)$ is a Cantor minimal system it is well known that its crossed product $C^*$-algebra has nuclear dimension one (since it is not AF).
By general results on extensions of $C^*$-algebras (\cite[Proposition 2.9]{Winter-Zacharias}),
it follows from~\cref{cor:extension} that the nuclear dimension of $\OO_\alpha$ is at most two.
We will show that the nuclear dimension is one by first determining the dynamic asymptotic dimension of the groupoid $\G_\alpha$ (\cite[Section 5]{Guentner-Willett-Yu}).

\begin{definition}
  An \'etale groupoid $\G$ has \emph{dynamic asymptotic dimension} at most $d\in \N$ if for any open and relatively compact subset $K\subset \G$
  there exist open subsets $U_0,\dots,U_d$ in $\G^{(0)}$ such that $s(K) \cup r(K) \subset \bigcup_i U_i$ and such that the groupoid generated by the set
  \begin{equation} 
    K_{U_i} \coloneqq \{ g\in G : s(g), r(g) \in U_i \}
  \end{equation}
  is relatively compact in $\G$ for each $i = 0,\dots,d$.
\end{definition}

The crossed product groupoid of $(\Lambda_\alpha, \sigma_\alpha)$ has dynamic asymptotic dimension one by~\cite[Theorem 1.4(i)]{Guentner-Willett-Yu}.

In the proof below we shall use the notation
\begin{equation} 
  Z\big(\sigma_\alpha^j(\mu) : j = 0,\dots,n\big) \coloneqq \bigcup_{j = 0}^n Z\big(\sigma_\alpha^j(\mu)\big) 
\end{equation}
for a word $\mu\in \LL_\alpha$ and an integer $n \leq |\mu|$.
Moreover, if $\mu$ and $\nu$ are two words, then we write $\mu \preceq \nu$ to say that $\mu$ appears somewhere in $\nu$.

\begin{theorem} \label{thm:dad}
  The dynamic asymptotic dimension of $\G_\alpha$ is one for any $\alpha\in (0,1)\setminus \Q$.
\end{theorem}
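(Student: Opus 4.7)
The plan is to establish both $\mathrm{dad}(\G_\alpha) \geq 1$ and $\mathrm{dad}(\G_\alpha) \leq 1$. The lower bound is essentially automatic: if $\mathrm{dad}(\G_\alpha) = 0$, then by the Guentner--Willett--Yu theorem $\OO_\alpha$ would have nuclear dimension zero and hence be AF, contradicting the previous lemma's assertion that $\OO_\alpha$ is infinite (and thus not stably finite). The content lies in the upper bound.

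For the upper bound, fix an open relatively compact $K \subset \G_\alpha$ and choose $N \in \N$ large enough that $K$ is contained in the compact set
\begin{equation*}
  \G_\alpha^{(N)} = \{(\tilde x, k-l, \tilde y) \in \G_\alpha : k, l \leq N,\ \tilde\sigma_\alpha^k(\tilde x) = \tilde\sigma_\alpha^l(\tilde y)\}.
\end{equation*}
Using the minimality of $(\X_\alpha, \sigma_\alpha)$ together with the three-distance structure of return times in Sturmian subshifts, I would pick an admissible word $\mu \in \LL_\alpha$ with $|\mu| \gg N$ whose first-return times to the cylinder $Z(\mu)$ all exceed $4N + 4$. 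The point is that such long words exist with only a few (at most three) distinct return times by the Ostrowski numeration, and their cylinders admit clean Kakutani--Rokhlin towers.

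Next I would partition the shift indices $\{0, 1, \dots, |\mu|-1\}$ into two families $J_0, J_1$ of alternating consecutive blocks of length greater than $2N + 1$, with a small overlap near each boundary so that $J_0 \cup J_1 = \{0, \dots, |\mu|-1\}$, and set
\begin{equation*}
  \tilde U_i = \pi_\alpha^{-1}\big(Z(\sigma_\alpha^j(\mu) : j \in J_i)\big), \qquad i = 0, 1,
\end{equation*}
yielding clopen subsets $\tilde U_0, \tilde U_1 \subset \tilde\X_\alpha$ with union $\tilde\X_\alpha$. For any $(\tilde x, p, \tilde y) \in K_{\tilde U_i}$, the bound $|p| \leq N$ combined with the block-length condition $> 2N+1$ forces $\tilde x$ and $\tilde y$ to lie within a common block of $J_i$; iterating, every composition of elements from $K_{\tilde U_i}$ remains trapped within one block, so the generated subgroupoid is contained in $\G_\alpha^{(M)}$ for some $M$ depending only on $|\mu|$ and $N$, hence is relatively compact.

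The main obstacle is handling the dense isolated orbit $\iota_\alpha(\orb_\alpha(\omega_\alpha)) \subset \tilde\X_\alpha$. By~\cref{thm:cover}, this orbit is groupoid-disjoint from $\tilde\Lambda_\alpha$, so elements of $K$ involving an isolated point remain entirely within $\orb_\alpha(\tilde\omega_\alpha)$; their contribution to $\langle K_{\tilde U_i}\rangle$ reduces to a pair-groupoid on the isolated points lying in a single block. The hard technical point is to verify that only finitely many isolated orbit points fall into a single block at synchronization level $\leq M$, which requires exploiting both the length of $\mu$ and the sparseness of occurrences of $\sigma_\alpha^j(\mu)$ at prefixes of the orbit of $\omega_\alpha$ --- a combinatorial property of Sturmian sequences --- to guarantee that the generated pair-groupoid fragment is finite.
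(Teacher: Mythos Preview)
Your upper-bound argument has a genuine gap at the covering step. You set $\tilde U_i = \pi_\alpha^{-1}\bigl(\bigcup_{j\in J_i} Z(\sigma_\alpha^j(\mu))\bigr)$ and assert that $\tilde U_0 \cup \tilde U_1 = \tilde{\X}_\alpha$, which would require $\bigcup_{j=0}^{|\mu|-1} Z(\sigma_\alpha^j(\mu)) = \X_\alpha$. But $Z(\sigma_\alpha^j(\mu))$ is the cylinder of the \emph{suffix} $\mu_{[j,|\mu|)}$, so this union is precisely the set of $x\in \X_\alpha$ that begin with some suffix of $\mu$; there is no reason every point of $\X_\alpha$ should do so (for instance, if $\mu$ ends in $\0$ then any $x$ beginning with $\1$ must start with a longer suffix of $\mu$ that happens to begin with $\1$, and most such $x$ will not). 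You may be tacitly thinking of the Kakutani--Rokhlin levels $\sigma_\alpha^{-j}(Z(\mu))$ for the two-sided homeomorphism, which do partition $\Lambda_\alpha$, but those are not the sets you wrote down, and on the cover $\tilde{\X}_\alpha$ the tower picture must also account for the dense isolated orbit --- your final paragraph concedes this is unresolved. The three-distance theorem and Ostrowski numeration are red herrings here; they do not repair the covering.

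The paper's argument is much lighter and avoids all of this. One does \emph{not} attempt to cover $\X_\alpha$ by suffix-cylinders of a single word. Instead one picks two distinct words $\mu,\nu$ of length $2\bar l$ (where $\bar l$ bounds the cocycle on $K$), takes $\tilde U = \pi_\alpha^{-1}\bigl(\bigcup_{j=0}^{\bar l-1} Z(\sigma_\alpha^j(\mu))\bigr)$ and simply sets $\tilde V = \tilde{\X}_\alpha \setminus \tilde U$, so covering is automatic. The relative compactness of $\langle K_{\tilde V}\rangle$ then comes from uniform recurrence of $(\X_\alpha,\sigma_\alpha)$: there is a bound $\beta(\mu)$ such that $\mu$ appears in any window of length $\beta(\mu)$, so after at most $\beta(\mu)$ forward shifts any point lands in $\tilde U$, capping the number of composable elements of $K_{\tilde V}$ with nonnegative cocycle (and symmetrically for negative cocycle). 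The role of $\nu$ is the mirror argument for $K_{\tilde U}$. Because $\tilde U$ and $\tilde V$ are $\pi_\alpha$-preimages and the bound $\beta(\mu)$ depends only on the base point in $\X_\alpha$, isolated points require no special treatment whatsoever --- your ``hard technical point'' dissolves.
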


\begin{proof}
Let $K \subset \G_\alpha$ be an open and relatively compact subset. 
In particular, $\bar{K} \subset \bigcup_{i} Z(\tilde{\X}_\alpha, k_i, l_i, \tilde{\X}_\alpha)$ for some finite union.
Note that 
\begin{equation} 
  Z(\tilde{\X}_\alpha, k_i, l_i, \tilde{\X}_\alpha) = Z(\tilde{\X}_\alpha, 0, k_i - l_i, \tilde{\X}_\alpha) \cup Z(\orb(\tilde{\omega}), k_i, l_i, \orb(\tilde{\omega}))
\end{equation}
if $k_i - l_i \leq 0$ (of course, a similar thing happens if $k_i - l_i \geq 0$),
and that the groupoid generated by the right most set is finite.
Since the groupoid generated by a subset $K$ only depends on $K\cup K^{-1}\cup s(K)\cup r(K)$,
we may therefore assume that 
\begin{equation} 
  K = \bigcup_{l\in F} Z(\tilde{\X}_\alpha, 0, l, \tilde{\X}_\alpha),
\end{equation}
for some finite subset $F \subset \N$.

Let $\bar{l} = \max\{l : l\in F\}$ and choose distinct words $\mu', \nu'$ of length $\bar{l}$.
Extend both words to the left (in any way) to obtain words $\mu, \nu$ of length $2 \bar{l}$.
Now let 
\begin{equation} 
  \tilde{U} \coloneqq \pi_\alpha^{-1}\big(Z(\sigma_\alpha^j(\mu) : j = 0,\dots,\bar{l}-1)\big) \quad \textrm{and}\quad \tilde{V} \coloneqq \tilde{\X} \setminus \tilde{U},
\end{equation}
and observe that $\pi_\alpha^{-1}\big(Z(\sigma_\alpha^j(\nu) : j = 0,\dots,\bar{l}-1)\big) \subset \tilde{V}$. 
Then $\tilde{U}$ and $\tilde{V}$ are disjoint open sets which cover $\tilde{\X}$.
Let 
\begin{equation} 
  K_{\tilde{U}} \coloneqq \{ g\in K : s(g),r(g)\in \tilde{U} \} \quad \textrm{and} \quad   K_{\tilde{V}} \coloneqq \{ g\in K : s(g),r(g)\in \tilde{V} \}.
\end{equation}
We shall verify that the groupoids generated by these two sets in $\G_\alpha$ are relatively compact, and this will prove the claim.

If there is a bound on the number of elements in $\{g\in K_{\tilde{V}} : c(g) \geq 0\}$ that can be concatenated, say $\beta\in \N$,
then the groupoid generated by $K_{\tilde{V}}$ is contained in
\begin{equation}  \label{eq:compact-set}
  \bigcup_{k_i,l_i \leq \bar{k} \beta} Z(\tilde{\X}_\alpha, k_i, l_i, \tilde{X}_\alpha)
\end{equation}
which is compact (being a finite union of compact sets).

Suppose that $g\in K$ with $\tilde{x} \coloneqq s(g), r(g)\in \tilde{V}$, and let $x \coloneqq \pi_\alpha(\tilde{x}) \in \X_\alpha$.
Let $\beta(\mu)\in \N$ be such that $\mu \preceq x_{[0, \beta(\mu)]}$, and note that this number can be chosen to be independent of $x$.
After applying at most $\beta(\mu)$ shifts to $\tilde{x}$,
there is an element $\tilde{x}'$ such that 
\[
  x' \coloneqq \pi_\alpha(\tilde{x}') \in Z\big(\sigma_\alpha^j(\mu) : j = 0,\dots,\bar{l} - 1\big).
\]
So $\tilde{x}' \in \tilde{U}$.
This means that there can be at most $\beta(\mu)$ concatenations of elements in $\{g\in K_{\tilde{V}} : c(g) \geq 0\}$.
A similar argument applies to $\{ g\in K_{\tilde{V}} : c(g)\leq 0\}$.
Therefore, the groupoid generated by $K_{\tilde{V}}$ is contained in a compact set of the form~\labelcref{eq:compact-set} so it is relatively compact.

A similar argument applies to $g\in K$ with $\tilde{x} \coloneqq s(g), r(g) \in \tilde{U}$.
Let $x \coloneqq \pi_\alpha(\tilde{x}) \in Z(\sigma_\alpha^j(\mu) : j = 0,\dots,\bar{k}-1)$.
If $\beta(\nu)\in \N$ is a number such that $\nu \preceq x_{[0, \beta(\nu)]}$ (independent of $x$),
then after at most $\beta(\nu)$ shifts of $\tilde{x}$, 
there is an element $\tilde{x}'$ with $x' \coloneqq \pi_\alpha(\tilde{x}') \in Z(\sigma_\alpha^j(\nu) : j = 0,\dots,\bar{l} - 1)$.
As above, it follows that the groupoid generated by $K_{\tilde{U}}$ is contained in a set of the form~\labelcref{eq:compact-set} and is hence relatively compact.
\end{proof}

\begin{corollary} 
  The nuclear dimension of $\OO_\alpha$ is one for every $\alpha\in (0,1)\setminus \Q$.
\end{corollary}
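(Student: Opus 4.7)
The plan is to sandwich $\dim_{\text{nuc}}(\OO_\alpha)$ between $1$ and $1$ using two separate ingredients, one giving the upper bound and one giving the lower bound.

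For the upper bound I would invoke the main theorem of Guentner, Willett, and Yu in~\cite{Guentner-Willett-Yu}, which bounds the nuclear dimension of the reduced groupoid $C^*$-algebra of a principal étale groupoid in terms of the dynamic asymptotic dimension of the groupoid together with the covering dimension of its unit space. The unit space $\tilde{\X}_\alpha$ is totally disconnected, hence zero-dimensional, and \cref{thm:dad} establishes that the dynamic asymptotic dimension of $\G_\alpha$ equals one. Feeding these two numbers into the Guentner--Willett--Yu estimate immediately yields $\dim_{\text{nuc}}(\OO_\alpha) \leq 1$.

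For the lower bound I would rule out nuclear dimension zero by showing that $\OO_\alpha$ is not AF. Recall that among separable $C^*$-algebras, having nuclear dimension zero characterises the AF-algebras, and every AF-algebra is stably finite. However, the preceding lemma exhibits an explicit proper isometry $\chi_B\in \OO_\alpha$, so $\OO_\alpha$ is infinite and in particular fails to be stably finite. Consequently $\OO_\alpha$ is not AF and $\dim_{\text{nuc}}(\OO_\alpha)\geq 1$, which combined with the upper bound completes the argument.

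Both potential obstacles have already been neutralised earlier in the paper: the dynamical heart of the matter, namely the dynamic asymptotic dimension computation, was handled in \cref{thm:dad} via a direct combinatorial argument using the particular structure of Sturmian subshifts, and the construction of the proper isometry was carried out in the preceding lemma. At this stage the corollary reduces to a bookkeeping exercise of citing~\cite{Guentner-Willett-Yu} and tracking two integers through its bound.
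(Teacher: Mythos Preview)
Your proposal is correct and matches the paper's approach: the paper likewise combines \cref{thm:dad} with the zero-dimensionality of $\tilde{\X}_\alpha$ and invokes \cite[Theorem 8.6]{Guentner-Willett-Yu} to obtain $\dim_{\mathrm{nuc}}(\OO_\alpha)\leq 1$. The paper leaves the lower bound implicit, whereas you spell it out via the non-AF argument using the proper isometry from the preceding lemma; this makes your write-up slightly more complete but otherwise identical in substance.
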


\begin{proof}
  The unit space $\tilde{\X}_\alpha$ of $\G_\alpha$ is zero-dimensional, so it follows from ~\cref{thm:dad} and~\cite[Theorem 8.6]{Guentner-Willett-Yu} that 
  the nuclear dimension of $\OO_\alpha$ is one.
\end{proof}

%********************************************************************

\end{document}